\documentclass[11pt,a4paper]{amsart}
\textwidth=135mm

\usepackage{amsmath}
\usepackage{geometry}
\usepackage{hyperref}
\usepackage[utf8]{inputenc}
\usepackage[english]{babel}
\usepackage[T1]{fontenc}
\usepackage{url}
\usepackage{mathrsfs}
\usepackage{ifthen}
\usepackage{xparse}
\usepackage{diagbox}

\usepackage{tikz}
\usetikzlibrary{knots}
\usetikzlibrary{decorations.pathreplacing}
\usepackage{xcolor}
\usepackage{graphicx}

\usepackage{amssymb}
\usepackage{lscape}
\usepackage{multirow}
\usepackage{wrapfig} 

\usepackage{caption}
\usepackage{enumitem}

\newtheorem{thm}{Theorem}

\newtheorem{corollary}{Corollary}

\theoremstyle{definition}
\newtheorem{definition}{Definition}

\newtheorem*{convention*}{Conventions}

\theoremstyle{remark}
\newtheorem{remark}{Remark}

\newcommand{\oeislink}[1]{\href{https://oeis.org/#1}{\textcolor{blue}{\underline{#1}}}}

\newcommand{\R}{\mathbb{R}}

\newcommand \Mset  {\mathfrak{M}}     
\newcommand \M  {\mathcal{M}}       
\newcommand \Mclset  {\overline{\mathfrak{M}}}     
\newcommand \Mcl  {\overline{\mathcal{M}}}       
\newcommand \Mirr  {\mathcal{M}^{(Ir)}}  
\newcommand \Miirr  {\mathcal{M}^{(IIr)}}
\newcommand \Mis  {\mathcal{M}^{(IS)}} 


\title{Singular meanders}
\author{Y. Belousov}
\email{bus99@yandex.ru}
\address{Saint Petersburg State University}
\thanks{This work was funded by the Russian Science Foundation, grant No. 25-11-00251, \url{https://rscf.ru/project/25-11-00251/}.}

\begin{document}
\begin{abstract}
    The problem of enumerating meanders --- pairs of simple plane curves with transverse intersections --- was formulated about forty years ago and is still far from solved. Recently, it was discovered that meanders admit a factorization into prime components. This factorization naturally leads to a broader class of objects, which we call singular meanders, in which tangential intersections between the curves are also allowed. In the present paper we initiate a systematic study of singular meanders: we develop a basic combinatorial framework, point out connections with other combinatorial objects and known integer sequences, and completely enumerate several natural families of singular meanders.
\end{abstract}

\maketitle
	
\section*{Introduction}
A meander is a configuration of a pair of simple curves in a disk. Examples of meanders are shown in Fig.~\ref{fig:examples-of-meanders}.
The problem of counting meanders was initially formulated by V.~Arnol'd in~\cite{A88}; a similar problem of counting closed meanders (under the name of planar permutation) was formulated by P.~Rosenstiehl in~\cite{R84}. Meanders appear in different areas of mathematics, and we refer readers to the survey~\cite{Z23} for both connections and historical context.
At present, the meander problem is far from being solved: counts are known only for small numbers of intersections; no subexponential-time algorithms for computing these numbers are known; and the asymptotic growth rate remains conjectural.

\begin{figure}[ht]
    \centering
    \resizebox{0.9\linewidth}{!}{
    \begin{tikzpicture}[scale = 5]
\draw[thick] (0, 0) to (1, 0);
\draw[thick] (0.0333263, 0.179487)
	to[out = 0, in = 90, distance = 4.83322] (0.384615, 0)
	to[out = -90, in = -90, distance = 2.89993] (0.153846, 0)
	to[out = 90, in = 90, distance = 0.966644] (0.230769, 0)
	to[out = -90, in = -90, distance = 0.966644] (0.307692, 0)
	to[out = 90, in = 90, distance = 2.89993] (0.0769231, 0)
	to[out = -90, in = -90, distance = 4.83322] (0.461538, 0)
	to[out = 90, in = 90, distance = 4.83322] (0.846154, 0)
	to[out = -90, in = -90, distance = 2.89993] (0.615385, 0)
	to[out = 90, in = 90, distance = 0.966644] (0.692308, 0)
	to[out = -90, in = -90, distance = 0.966644] (0.769231, 0)
	to[out = 90, in = 90, distance = 2.89993] (0.538462, 0)
	to[out = -90, in = -90, distance = 4.83322] (0.923077, 0)
	to[out = 90, in = 180, distance = 0.966644] (0.966674, 0.179487);
\draw[help lines] (0.5, 0) circle (0.5);
\draw[fill] (0.0333263, 0.179487) circle (0.0166667);
\draw[fill] (0.966674, 0.179487) circle (0.0166667);
\draw[fill] (0, 0) circle (0.0166667);
\draw[fill] (1, 0) circle (0.0166667);
\begin{scope}[shift = {(1.7, 0)}]
\draw[thick] (0, 0) to (1, 0);
\draw[thick] (0.0740823, 0.261905)
	to[out = 0, in = 90, distance = 2.69279] (0.214286, 0)
	to[out = -90, in = -90, distance = 0.897598] (0.285714, 0)
	to[out = 90, in = 90, distance = 8.07838] (0.928571, 0)
	to[out = -90, in = -90, distance = 0.897598] (0.857143, 0)
	to[out = 90, in = 90, distance = 6.28318] (0.357143, 0)
	to[out = -90, in = -90, distance = 2.69279] (0.571429, 0)
	to[out = 90, in = 90, distance = 0.897598] (0.642857, 0)
	to[out = -90, in = -90, distance = 6.28318] (0.142857, 0)
	to[out = 90, in = 90, distance = 0.897598] (0.0714286, 0)
	to[out = -90, in = -90, distance = 8.07838] (0.714286, 0)
	to[out = 90, in = 90, distance = 2.69279] (0.5, 0)
	to[out = -90, in = -90, distance = 0.897598] (0.428571, 0)
	to[out = 90, in = 90, distance = 4.48799] (0.785714, 0)
	to[out = -90, in = 180, distance = 2.69279] (0.925918, -0.261905);
\draw[help lines] (0.5, 0) circle (0.5);
\draw[fill] (0.0740823, 0.261905) circle (0.0166667);
\draw[fill] (0.925918, -0.261905) circle (0.0166667);
\draw[fill] (0, 0) circle (0.0166667);
\draw[fill] (1, 0) circle (0.0166667);
\end{scope}
\end{tikzpicture}
}
    \caption{Examples of (non-singular) meanders.}
    \label{fig:examples-of-meanders}
\end{figure}
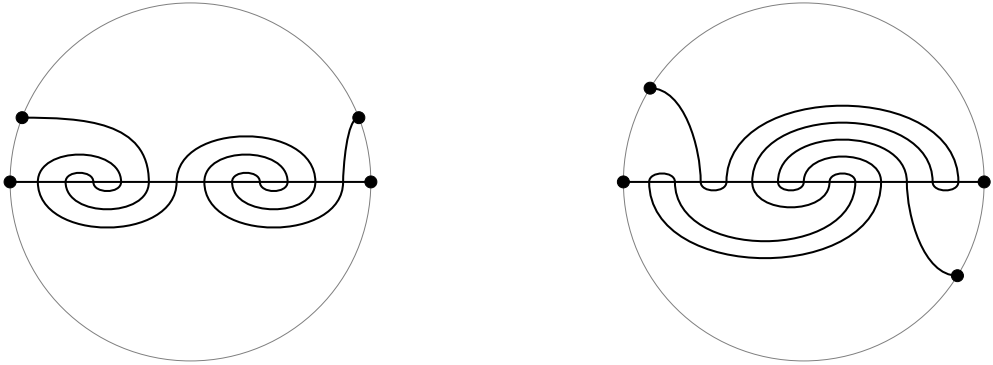

In a recent paper~\cite{B24}, we introduced the factorization of meanders into prime components, and in this context more general objects naturally arise --- singular meanders. These are meanders in which not only transverse intersections but also tangency points are allowed (see the example in Fig.~\ref{fig:examples-of-singular-meanders}). 
Although singular meanders have not been studied before, preliminary enumerative data suggest links with other combinatorial families (some of which were already mentioned in~\cite{B24}). In the present note, we provide a preliminary analysis of singular meanders: we discuss some of their properties, and provide some numerical observations.

The paper is organized as follows. In Section~\ref{sec:definitions}, we formally define singular meanders and closed singular meanders and introduce the necessary related notions. In Section~\ref{sec:proof-and-corollaries}, we discuss elementary combinatorial properties of singular meanders. In Section~\ref{sec:known-families}, we list all families of singular meanders that we are currently able to describe completely.

\subsection*{Acknowledgments}
The author thanks Andrei Malyutin for many fruitful discussions.

\section{Definitions}\label{sec:definitions}
In this section, we present all notation related to singular meanders that we will need later (we refer the reader to~\cite[Section~2]{B24} for a more detailed discussion). In Subsection~\ref{subsec:closed-meanders}, we define closed singular meanders.

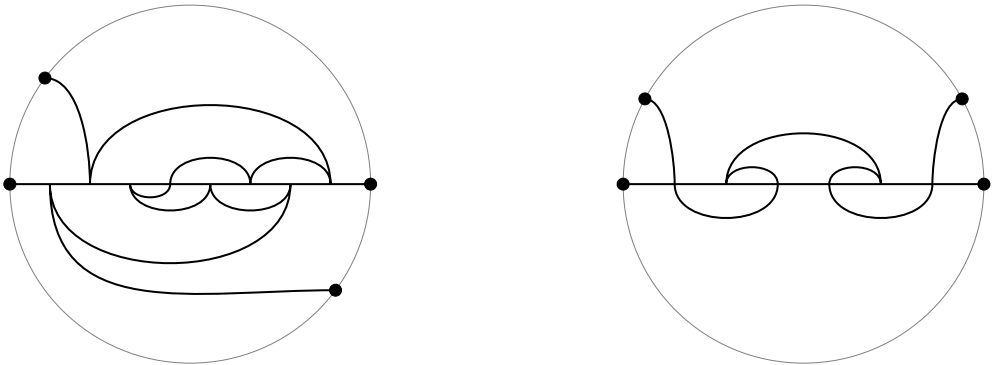
\begin{figure}[ht]
    \centering    
    \resizebox{0.9\linewidth}{!}{
    \begin{tikzpicture}[scale = 5]
    \draw[thick] (0, 0) to (1, 0);
    \draw[thick] (0.0972488, 0.296296)
    	to[out = 0, in = 90, distance = 2.79253] (0.222222, 0)
    	to[out = 90, in = 90, distance = 8.37758] (0.888889, 0)
    	to[out = 90, in = 90, distance = 2.79253] (0.666667, 0)
    	to[out = 90, in = 90, distance = 2.79253] (0.444444, 0)
    	to[out = -90, in = -90, distance = 1.39626] (0.333333, 0)
    	to[out = -90, in = -90, distance = 2.79253] (0.555556, 0)
    	to[out = -90, in = -90, distance = 2.79253] (0.777778, 0)
    	to[out = -90, in = -90, distance = 8.37758] (0.111111, 0)
    	to[out = -90, in = 180, distance = 11.1701] (0.902751, -0.296296);
    \draw[help lines] (0.5, 0) circle (0.5);
    \draw[fill] (0.0972488, 0.296296) circle (0.0166667);
    \draw[fill] (0.902751, -0.296296) circle (0.0166667);
    \draw[fill] (0, 0) circle (0.0166667);
    \draw[fill] (1, 0) circle (0.0166667);
    \begin{scope}[shift = {(1.7, 0)}]
    \draw[thick] (0, 0) to (1, 0);
    \draw[thick] (0.0603289, 0.238095)
    	to[out = 0, in = 90, distance = 1.7952] (0.142857, 0)
    	to[out = -90, in = -90, distance = 3.59039] (0.428571, 0)
    	to[out = 90, in = 90, distance = 1.7952] (0.285714, 0)
    	to[out = 90, in = 90, distance = 5.38559] (0.714286, 0)
    	to[out = 90, in = 90, distance = 1.7952] (0.571429, 0)
    	to[out = -90, in = -90, distance = 3.59039] (0.857143, 0)
    	to[out = 90, in = 180, distance = 1.7952] (0.939671, 0.238095);
    \draw[help lines] (0.5, 0) circle (0.5);
    \draw[fill] (0.0603289, 0.238095) circle (0.0166667);
    \draw[fill] (0.939671, 0.238095) circle (0.0166667);
    \draw[fill] (0, 0) circle (0.0166667);
    \draw[fill] (1, 0) circle (0.0166667);
    \end{scope}
    \end{tikzpicture}
    }
    \caption{Examples of singular meanders.}
    \label{fig:examples-of-singular-meanders}
\end{figure}

\begin{definition}\label{def:meander}
A \emph{singular meander} $M$ is a triple $(D, (p_1, p_2, p_3, p_4), (l, m))$ of
\begin{itemize}
    \item a 2-dimensional closed disk $D$;
    \item four distinct points $p_1, p_2, p_3, p_4$ on the boundary $\partial D$ such that $p_3$ and $p_4$ lie in the same connected component of $\partial D \setminus \{p_1, p_2\}$;
    \item the images $m$ and $l$ of smooth proper embeddings of the segment $[0;\,1]$ into $D$ such that $\partial m = \{p_1, p_3\}$, $\partial l = \{p_2, p_4\}$, and $m \cap l$ is a non-empty finite set.
\end{itemize}
The intersection points of $l$ and $m$ are called \emph{intersections of $M$}. 
\end{definition}

\begin{definition}
We say that two singular meanders
$$
M = (D, (p_1, p_2, p_3, p_4), (l, m))
\quad\text{and}\quad
M' = (D', (p_1', p_2', p_3', p_4'), (l', m'))
$$
are \emph{equivalent} if there exists a homeomorphism $f \colon D \to D'$ (not necessarily orientation-preserving) such that $f(m) = m'$, $f(l) = l'$, and $f(p_i) = p_i'$ for each $i = 1, \dots, 4$. 
\end{definition}

\begin{definition}\label{def:order}
Let $M$ be a singular meander and write $[M]$ for its equivalence class.
For any $M' \in [M]$, let $n_{\mathrm{t}}(M')$ and $n_{\mathrm{nt}}(M')$ be,
respectively, the numbers of transverse and non-transverse intersections
of the arcs $l$ and $m$ of $M'$.

The \emph{order} of $M$ is the pair
$$
(n, k)
=
\left(
\max_{M' \in [M]} n_{\mathrm{t}}(M'),
\;
\min_{M' \in [M]} n_{\mathrm{nt}}(M')
\right).
$$
If the order of $M$ is $(n, k)$, the \emph{total order} of $M$ is $n + k$.
We denote by $\Mset_{n,k}$ the set of equivalence classes of singular meanders of
order $(n, k)$ and by $\M_{n,k}$ its cardinality.
\end{definition}

\begin{table}[h]
    \begin{tabular}{|c|c|c|c|c|c|c|c|c|c|c|}
        \hline
        \diagbox[width=3em]{$\textbf{n}$}{$\textbf{k}$}   & \textbf{0} & \textbf{1} & \textbf{2}  & \textbf{3}   & \textbf{4}   & \textbf{5}    & \textbf{6}     & \textbf{7}     & \textbf{8}      & \textbf{9}
        \\ \hline
        \textbf{0} & 0 & 1 & 1  & 1   & 1   & 1    & 1     & 1     & 1      & 1 
        \\ \hline
        \textbf{1} & 1 & 4 & 14 & 48  & 166 & 584  & 2092  & 7616  & 28102  & 104824
        \\ \hline
        \textbf{2} & 1 & 7 & 36 & 166 & 730 & 3138 & 13328 & 56204 & 235854 & 986010 
        \\ \hline
        \textbf{3} & 2 & 24 & 188 & 1224 & 7202 & 39808 & 210992 & 1085248 & 5457284 & 26959616
        \\ \hline
    \end{tabular}
    \caption{Values of $\M_{n,k}$ for small $n$ and $k$.}
    \label{tab:mnk}
\end{table}

\begin{definition}\label{def:submeander}
Let
$$
M = (D, (p_1, p_2,p_3,p_4), (l, m)) \quad \text{and} \quad M' = (D', (p_1', p_2',p_3',p_4'),(l', m'))
$$
be two singular meanders. We say that $M'$ is a \emph{submeander} of $M$ if
\begin{itemize}
    \item $D' \subseteq D$;
    \item $m' = D' \cap m$;
    \item $l' = D' \cap l$;
    \item $p_1' = \gamma_m(t_1)$, where $\gamma_m \colon [0;\,1] \to D$ is any injective continuous map such that $\gamma_m([0;\,1]) = m$, $\gamma_m(0) = p_1$, and
    $$
        t_1 = \min\{t \in [0;\,1] \mid \gamma_m(t) \in D'\};
    $$
    \item Let $S = \gamma_m^{-1}(l) \cap [0;\, t_1]$.
    If $S \neq \varnothing$, let $t_q = \max S$ and $q = \gamma_m(t_q)$; otherwise, set $q := p_2$.
    Choose an injective continuous map $\gamma_l \colon [0;\,1] \to D$ with $\gamma_l([0;\,1]) = l$ such that $\gamma_l^{-1}(q) < t$ for all $t \in \gamma_l^{-1}(D')$.
    Define $p_2'$ by $p_2' := \gamma_l(t_2)$, where
    $$
        t_2 = \min\{t \in [0;\,1] \mid \gamma_l(t) \in D'\}.
    $$
\end{itemize}
\end{definition}

\begin{definition}
Let $M$ be a singular meander, and let $M'$ and $M''$ be two of its submeanders. We say that $M'$ and $M''$ are \emph{equivalent with respect to $M$} if they contain exactly the same subset of intersection points of $M$.
\end{definition}

\begin{definition} \label{def:insertion}
Let
$$M = (D, (p_1, p_2,p_3,p_4), (l, m))\quad \text{and} \quad M' = (D', (p_1', p_2',p_3',p_4'),(l', m'))$$
be two singular meanders of order $(n, k)$ and $(n',k')$ respectively, and let 
$$M'' = (D'', (p_1'', p_2'',p_3'',p_4''), (l'', m''))$$
be a submeander of $M$  of order $(n'',k'')$ and total order $1$ such that $n'' \equiv n' \pmod{2}$. 
Consider a map $f\colon\partial D'' \to \partial D'$ such that $f({p}_i'')=p'_i$ for each $i=1,\dots,4$. 
Then there is a well-defined meander $$\tilde{M} = (\tilde{D}, (p_1, p_2, p_3, p_4), (\tilde{l}, \tilde{m}))$$ where  
\begin{itemize}
    \item $\tilde{D} = \big(D\setminus \operatorname{Int}(D'')\big) \cup_f D'$;
    \item $\tilde{l} = \big(l\setminus \operatorname{Int}(D''\cap l)\big)\cup_f l'$;
    \item $\tilde{m} = \big(m\setminus \operatorname{Int}(D''\cap m)\big)\cup_f m'$.
\end{itemize}
We say that $\tilde{M}$ is obtained by the \emph{insertion of $M'$ into $M$ at $M''$}.
\end{definition}

\begin{definition}
Let $M$ be a singular meander of order $(n, k)$. 
$M$ is said to be \emph{irreducible} if its total order is greater than two and there are precisely $n + k + 1$ pairwise non-equivalent submeanders of $M$. We denote by $\Mirr_{n,k}$ the number of equivalence classes of irreducible singular meanders of order $(n, k)$.  \\
$M$ is called a \emph{snake} if its total order is greater than one and there are precisely $\frac{(n + k)(n + k + 1)}{2}$ pairwise non-equivalent submeanders of $M$.  \\
$M$ is called an \emph{iterated snake} if it can be obtained from a snake by finitely many insertions of snakes. We denote by $\Mis_{n,k}$ the number of equivalence classes of iterated snakes of order $(n, k)$.  
\end{definition}

\begin{table}[h]
    \centering
        \begin{tabular}{|c|c|c|c|c|c|c|c|c|c|c|c|}
        \hline
        \diagbox[width=3em]{$\textbf{n}$}{$\textbf{k}$} & \textbf{3} & \textbf{4} & \textbf{5} & \textbf{6} & \textbf{7} & \textbf{8} & \textbf{9} & \textbf{10} & \textbf{11} & \textbf{12} & \textbf{13}
        \\ \hline
        \textbf{1} & 0 & 2  & 0  & 8   & 8   & 36   & 72    & 212   & 528    & 1438   & 3816    \\ \hline
        \textbf{2} & 2 & 0  & 12 & 14  & 72  & 162  & 530   & 1452  & 4314   & 12402  & 36246   \\ \hline
        \textbf{3} & 0 & 0  & 16 & 48  & 240 & 884  & 3328  & 11960 & 42112  & 145860 & 497856  \\ \hline
        \textbf{4}     & 0 & 10 & 36 & 210 & 884 & 3744 & 14950 & 57904 & 218790 & 809016 & 2942240 \\ \hline
        \end{tabular}
    \caption{Values of $\Mirr_{n,k}$ for small $n$ and $k$.}
    \label{tab:mirrnk}
\end{table}

\begin{thm}[\cite{B24}]\label{thm:decomp}
    Each singular meander can be canonically constructed using iterated snakes and irreducible singular meanders.
\end{thm}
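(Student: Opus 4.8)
The plan is to turn the statement into a purely combinatorial decomposition and then run a substitution-decomposition (modular-decomposition) argument. First I would repackage a singular meander $M$ of order $(n,k)$: set $N=n+k$, list the intersections $x_1,\dots,x_N$ of $M$ in the order in which a parametrization of $m$ from $p_1$ to $p_3$ meets them, and record each submeander $M'$ by the set $E(M')\subseteq\{x_1,\dots,x_N\}$ of intersections it contains. By the definition of equivalence of submeanders with respect to $M$, the equivalence class of a submeander is recorded exactly by this set, so the problem is really about the family $\mathcal I(M)=\{E(M') : M'\text{ a submeander of }M\}$. The lemma I would prove first — and the one I expect to be the technical heart — is geometric: each $E(M')$ is a nonempty \emph{interval} $\{x_i,\dots,x_j\}$ of the $m$-order, $\mathcal I(M)$ always contains every singleton and the full set, and $\mathcal I(M)$ is closed under intersection and under union of any two members that overlap. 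The ``interval'' part follows from the requirement in Definition~\ref{def:meander}, applied to $M'$, that $D'\cap m$ and $D'\cap l$ be single embedded arcs; the closure part is the real work, and amounts to producing the required sub-disks of $D$ for $I\cap J$ and $I\cup J$ by deforming, intersecting, and reconnecting the sub-disks that cut out submeanders realizing $I$ and $J$ — with the tangency points tracked carefully throughout.

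Granting this, I would extract a canonical decomposition tree. Call $I\in\mathcal I(M)$ \emph{strong} if it overlaps no other member of $\mathcal I(M)$; the closure property forces the strong members to be laminar, hence to form a rooted tree $T(M)$ ordered by inclusion, with the singletons $\{x_i\}$ as leaves and $\{x_1,\dots,x_N\}$ as root. Writing $I_1<\dots<I_r$ for the children of an internal node $v$ in the $m$-order, I would show — again using closure — that $v$ is of exactly one of two types, with no intermediate behaviour: \emph{linear}, when every union of consecutive children $I_a\cup\dots\cup I_b$ lies in $\mathcal I(M)$, or \emph{prime}, when no such union other than the $I_a$'s and $I_1\cup\dots\cup I_r$ lies in $\mathcal I(M)$; one also checks that $r\ge 3$ for prime nodes, since a node with two children is automatically linear. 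This labelled rooted tree, together with the record at each node of which intersections of $M$ the node's interval contains, is attached canonically to $M$, as everything in it is determined by $\mathcal I(M)$ and hence by the equivalence class $[M]$.

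Next I would read the building blocks off the tree. To a node $v$ spanning an interval of $\mathcal I(M)$ I attach the submeander $M_v$ cut out by a sub-disk realizing that interval, but with each child subtree collapsed to a submeander of total order $1$ — a transverse atom if that subtree contains an odd number of transverse intersections of $M$, a tangential atom if an even number. This is precisely the congruence $n''\equiv n'\pmod 2$ that Definition~\ref{def:insertion} demands, so matching the types is not an extra hypothesis but a bookkeeping identity, and it makes the collapse a legitimate inverse of insertion. If $v$ is prime with $r$ children, $M_v$ has exactly $r+1$ pairwise non-equivalent submeanders and total order $r>2$, hence is an irreducible singular meander; if $v$ is linear, then every interval of its children is realized, so $M_v$ is a snake — and because the parity constraints generally prevent a snake inserted at an atom of another snake from again being a snake, nested linear structure will in general only assemble into an \emph{iterated} snake, which is exactly why that is the class appearing in the statement. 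Reconstructing $M$ then amounts to starting from the block $M_{\mathrm{root}}$ and, recursively, inserting at each of its atoms the block attached to the corresponding child of the root (Definition~\ref{def:insertion}); the recursion is well-founded because $T(M)$ is finite, and it reproduces $M$ because at each stage the inserted blocks realize exactly the submeanders they are meant to.

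Finally, canonicity is immediate: $T(M)$ with its linear/prime labelling depends only on $\mathcal I(M)$, the block $M_v$ at each node is determined up to equivalence by the node together with the restriction of $\mathcal I(M)$ to it, and $\mathcal I(M)$ itself is an invariant of $[M]$; hence the collection of iterated snakes and irreducible singular meanders produced, and the pattern of insertions realizing $M$ from them, depend only on the equivalence class of $M$. The base cases of total order at most $2$ — the transverse atom of order $(1,0)$, the tangential atom of order $(0,1)$, and the snakes of order $(2,0)$, $(1,1)$, $(0,2)$ — are checked directly and anchor the scheme. The one genuinely difficult point, to repeat, is the first lemma: establishing the closure of $\mathcal I(M)$ under overlapping intersections and unions at the level of honest sub-disks of $D$, with the non-transverse intersections handled correctly.
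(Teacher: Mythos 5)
You should first be aware that the paper you are writing into contains no proof of Theorem~\ref{thm:decomp}: it is stated as imported from~\cite{B24}, so there is no in-paper argument to compare yours against line by line. Judged against what the definitions force, your substitution-decomposition skeleton is almost certainly the right reconstruction: the count $n+k+1$ in the definition of irreducibility says exactly that only the singletons and the full set of intersections are realized by submeanders (your prime nodes); the count $\frac{(n+k)(n+k+1)}{2}$ for snakes is exactly the number of nonempty intervals of an $(n+k)$-element chain (your linear nodes); and the congruence $n''\equiv n'\pmod 2$ in Definition~\ref{def:insertion} is precisely the crossing-parity bookkeeping you invoke when collapsing a block to a transverse or tangential atom.

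As a proof, however, the proposal has a genuine gap, and it sits exactly where you say it does. You never establish (i) that a set of intersections which is an interval in both the $m$-order and the $l$-order is actually realized by a subdisk $D'$ with $D'\cap m$ and $D'\cap l$ embedded arcs and no extraneous intersections --- a planarity statement (a regular neighbourhood of the two subarcs is a priori a disk with holes, and one must argue the holes contain no other part of $l\cup m$), and the place where tangencies could in principle misbehave; nor (ii) the closure of $\mathcal{I}(M)$ under overlapping unions and intersections, which at the set level is trivial for intervals of a single order but at the level of honest subdisks reduces to (i). Note also that your encoding records only the $m$-order, whereas $E(M')$ is forced to be an interval of the $l$-order as well (since $D'\cap l$ is an arc), and both orders are needed for (i) to be the correct statement. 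Two further steps are asserted rather than proved: that collapsing a child block to an atom is a well-defined inverse of insertion, so the recursive reassembly reproduces $M$ itself and not merely something with the same interval family; and the maximality convention that merges nested linear nodes into a single iterated snake --- your parity explanation for why iterated snakes rather than snakes appear is not really the point, which is canonicity, i.e.\ eliminating the ambiguity of cutting a tower of snakes into pieces in several ways. None of these claims looks false, but together they are the content of the theorem rather than routine verifications, so the proposal should be regarded as a sound plan rather than a complete proof.
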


\begin{remark}
    The numbers of singular meanders, iterated snakes and irreducible singular meanders of small total order can be found in~\cite{Bcode}.
\end{remark}

\subsection*{Conventions}
\begin{enumerate}
    \item Without loss of generality, assume that for every singular meander $M$ of order $(n, k)$, we have $n_{\mathrm{t}}(M) = n$ and $n_{\mathrm{nt}}(M) = k$.
    \item We draw singular meanders in such a way that:
    \begin{itemize}
        \item $D$ is a Euclidean disk;
        \item $l$ is a horizontal diameter with $p_2$ at the left end;
        \item $p_1$ is placed above $p_2$.
    \end{itemize}
    This allows us to omit the labels $l$, $m$, and $p_1, \dots, p_4$.
\end{enumerate}

\subsection{Closed singular meanders}\label{subsec:closed-meanders}
\begin{definition}\label{def:closed-meander}
A \emph{closed singular meander} $M$ is a tuple $(D, (p_1, p_2), l, m)$ of
\begin{itemize}
    \item a $2$-dimensional closed disk $D$;
    \item two distinct points $p_1, p_2$ on the boundary $\partial D$;
    \item the image $l$ of a smooth proper embedding of the segment $[0;\,1]$ into $D$ such that $\partial l = \{p_1, p_2\}$;
    \item the image $m$ of a smooth proper embedding of the circle $S^1$ into $D$ such that $m$ and $l$ intersect (not necessarily transversely) in a non-empty finite set of points.
\end{itemize}
The intersection points of $m$ and $l$ are called \emph{intersections of $M$}. 
\end{definition}

\begin{definition}
We say that two closed singular meanders
$$
M = (D, (p_1, p_2), l, m)
\quad\text{and}\quad
M' = (D', (p_1', p_2'), l', m')
$$
are \emph{equivalent} if there exists an orientation-preserving homeomorphism $f \colon D \to D'$ such that $f(m) = m'$, $f(l) = l'$, and $f(p_i) = p_i'$ for each $i = 1, 2$. 
\end{definition}

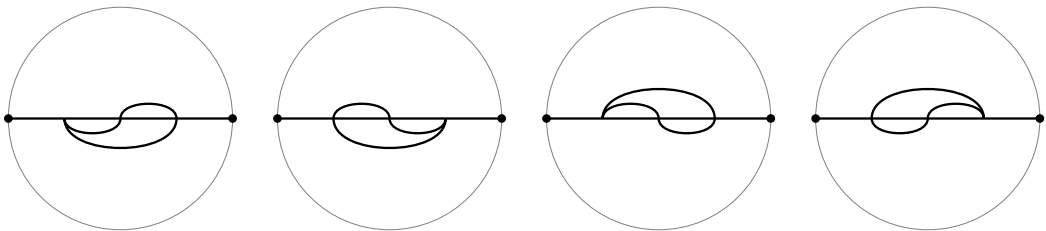
\begin{figure}[h]
    \centering
    \resizebox{0.95\linewidth}{!}{
    \begin{tikzpicture}[scale = 2.7]
\draw[thick] (0, 0) to (1, 0);
\draw[thick] (0.4*5/4, 0)
	to[out = -90, in = -90, distance = 2.51327] (0.2*5/4, 0)
	to[out = -90, in = -90, distance = 5.02655] (0.6*5/4, 0)
	to[out = 90, in = 90, distance = 2.51327] (0.4*5/4, 0);
\draw[help lines] (0.5, 0) circle (0.5);
\draw[fill] (0, 0) circle (0.0166667);
\draw[fill] (1, 0) circle (0.0166667);
\begin{scope}[shift = {(1.2, 0)}]
\draw[thick] (0, 0) to (1, 0);
\draw[thick] (0.2*5/4, 0)
	to[out = -90, in = -90, distance = 5.02655] (0.6*5/4, 0)
	to[out = -90, in = -90, distance = 2.51327] (0.4*5/4, 0)
	to[out = 90, in = 90, distance = 2.51327] (0.2*5/4, 0);
\draw[help lines] (0.5, 0) circle (0.5);
\draw[fill] (0, 0) circle (0.0166667);
\draw[fill] (1, 0) circle (0.0166667);
\end{scope}
\begin{scope}[shift = {(2.4, 0)}]
\draw[thick] (0, 0) to (1, 0);
\draw[thick] (0.2*5/4, 0)
	to[out = 90, in = 90, distance = 2.51327] (0.4*5/4, 0)
	to[out = -90, in = -90, distance = 2.51327] (0.6*5/4, 0)
	to[out = 90, in = 90, distance = 5.02655] (0.2*5/4, 0);
\draw[help lines] (0.5, 0) circle (0.5);
\draw[fill] (0, 0) circle (0.0166667);
\draw[fill] (1, 0) circle (0.0166667);
\end{scope}
\begin{scope}[shift = {(3.6, 0)}]
\draw[thick] (0, 0) to (1, 0);
\draw[thick] (0.2*5/4, 0)
	to[out = -90, in = -90, distance = 2.51327] (0.4*5/4, 0)
	to[out = 90, in = 90, distance = 2.51327] (0.6*5/4, 0)
	to[out = 90, in = 90, distance = 5.02655] (0.2*5/4, 0);
\draw[help lines] (0.5, 0) circle (0.5);
\draw[fill] (0, 0) circle (0.0166667);
\draw[fill] (1, 0) circle (0.0166667);
\end{scope}
\end{tikzpicture}
}
    \caption{Examples of non-equivalent closed singular meanders.}
    \label{fig:closed-non-equivalent}
\end{figure}

\begin{remark}
    Note that a homeomorphism in the definition of equivalence for singular meanders is not required to be orientation-preserving, in contrast to the definition of equivalence for closed singular meanders. This difference arises because we want to consider the four closed singular meanders in Fig.~\ref{fig:closed-non-equivalent} as pairwise non-equivalent.
\end{remark}

\begin{definition}
    For closed singular meanders, the \emph{order} and the \emph{total order} are defined verbatim as in Definition~\ref{def:order}. We denote by $\Mclset_{n,k}$ the set of equivalence classes of closed singular meanders of order $(n, k)$ and by $\Mcl_{n,k}$ its cardinality.
\end{definition}

\begin{remark}
    As in the case of classical meanders, one can introduce singular versions of other related objects: semimeanders (configurations of a ray and a curve, see~\cite{DFGG95}) and stamp foldings (configurations of a segment and a curve, see~\cite{koehler1968folding}). One could also consider more general objects allowing both singular intersection points and multiple connected components. 
\end{remark}

\section{Combinatorics of singular meanders}\label{sec:proof-and-corollaries}
Throughout this section, whenever we mention a singular meander, we implicitly choose the geometric representative satisfying our drawing conventions from Section~\ref{sec:definitions}.
\begin{thm}\label{thm:closed-and-open}
    Let $n$ be a positive even integer, and $k$ be a non-negative integer. Then
    $$\Mcl_{n,k} = \M_{n-1, k} + 2\M_{n, k-1}.$$
\end{thm}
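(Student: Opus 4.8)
The plan is to construct a bijection
\[
\Phi\colon\ \Mclset_{n,k}\ \longrightarrow\ \Mset_{n-1,k}\ \sqcup\ \bigl(\Mset_{n,k-1}\times\{+,-\}\bigr),
\]
which at once gives $\Mcl_{n,k}=\M_{n-1,k}+2\,\M_{n,k-1}$. Since $m$ is a circle it meets $l$ transversely an even number of times, so the hypothesis that $n$ be even is exactly what makes a closed singular meander of order $(n,k)$ possible; the map $\Phi$ will ``open'' the circle $m$ into an arc at a canonically chosen intersection point, destroying that intersection.

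Fix $M=(D,(p_1,p_2),l,m)$, drawn with $l$ a horizontal diameter. Orient $l$ from $p_1$ to $p_2$ and let $x_0$ be the first point of $m\cap l$ along $l$; it is preserved by the (orientation-preserving) homeomorphisms defining equivalence of closed singular meanders, hence depends only on $[M]$. Let $\sigma\subset l$ be the closed subarc from $p_1$ to $x_0$; by minimality its interior misses $m$. Delete $x_0$ from $m$, obtaining an embedded arc, push its two free ends --- keeping them off $l$ --- along a thin bicollar of $\sigma$ until they reach $\partial D$ next to $p_1$, and then rename $p_2$ as $p_2'$, rename $p_1$ as $p_4'$, and call the two free ends of the opened $m$ by $p_1',p_3'$, chosen so that $p_3'$ and $p_4'$ lie in the same component of $\partial D\setminus\{p_1',p_2'\}$. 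A routine check shows the result is a singular meander in the sense of Definition~\ref{def:meander}, of total order $n+k-1$, and:
\begin{itemize}
\item if $x_0$ is transverse, the two ends of $m$ emerge on opposite sides of $p_1$, one transverse intersection has vanished, and we set $\Phi(M)$ to be this meander; it lies in $\Mset_{n-1,k}$, and $n-1$ is odd, consistently with the fact that its four marked points then occur in interleaved cyclic order;
\item if $x_0$ is non-transverse, $m$ is tangent to $l$ at $x_0$ and bulges into one of the two half-disks $D^{+},D^{-}$ bounded by $l$ (distinguishable since $D$ is oriented and $l$ is oriented); the two ends of $m$ emerge on the \emph{same} side of $p_1$, one non-transverse intersection has vanished, $n$ is unchanged, and we set $\Phi(M)$ to be this meander together with the sign of the half-disk into which the bump went; it lies in $\Mset_{n,k-1}\times\{+,-\}$, and $n$ is even, consistently with its four marked points occurring in non-interleaved cyclic order.
\end{itemize}
Here the hypothesis ``$n$ even'' is precisely the parity bookkeeping matching ``$x_0$ transverse vs.\ non-transverse'' with the two admissible cyclic arrangements of the four marked points of an open singular meander --- recall that for such a meander the number of transverse intersections has the same parity as the ``interleaving'' of the endpoint pairs $\{p_1,p_3\}$ and $\{p_2,p_4\}$ on $\partial D$.

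The inverse reverses the surgery. Given an open singular meander of order $(n-1,k)$, reattach the two ends of $m$ into a circle by an arc running along $\partial D$ next to the endpoint of $l$ renamed $p_4'$ above, crossing $l$ exactly once transversely right beside that endpoint; rename that endpoint back to $p_1$ and the other to $p_2$, so that the recreated intersection is first along $l$. Given an open singular meander of order $(n,k-1)$ together with a sign $\varepsilon$, proceed identically except that the reattaching arc touches $l$ in exactly one tangential point beside that endpoint, bulging into the half-disk selected by $\varepsilon$. The key point is that, because closed singular meanders are taken up to orientation-preserving homeomorphism whereas open ones are taken up to arbitrary homeomorphism (cf.\ the remark after Fig.~\ref{fig:closed-non-equivalent}), the two values of $\varepsilon$ give genuinely inequivalent closed singular meanders, whereas in the transverse reattachment no such binary choice is available. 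One checks both constructions descend to equivalence classes and are mutually inverse to $\Phi$ --- applying $\Phi$ recovers the open meander and the sign because the recreated intersection is first along $l$ by construction, and reattaching after $\Phi$ just undoes the push along the clean arc $\sigma$ --- so $\Phi$ is a bijection and the theorem follows.

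The step I expect to be the main obstacle is making the ``opening''/``reattaching'' surgery precise, verifying it lands in valid (closed, resp.\ open) singular meanders and descends to equivalence classes, and above all pinning down the factor $2$: one must check that in the non-transverse case the two half-disk choices yield pairwise inequivalent closed singular meanders and that every closed singular meander with non-transverse first intersection arises exactly once this way, while simultaneously the transverse case carries no hidden doubling. All of this rests on the contrast between the two notions of equivalence (orientation-preserving for closed meanders, arbitrary for open ones), which is exactly what turns a naive ``$1+1$'' into ``$1+2$''.
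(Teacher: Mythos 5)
Your proposal is correct and is essentially the paper's own argument run in the opposite direction: the paper builds the bijection by \emph{closing} an open singular meander (adding one transverse or one tangential intersection at an end of $l$, with the reflection along $l$ supplying the factor $2$ in the tangential case), whereas you \emph{open} a closed singular meander at its first intersection along $l$ and record the half-disk of the tangency as the sign. Your explicit explanation of why no doubling occurs in the transverse case (the two ends of $m$ emerge on opposite sides of $l$, so the reattaching arc is unique up to isotopy) and of why the two signs give inequivalent closed meanders (closed equivalence is orientation-preserving and fixes the endpoints of $l$) is a welcome elaboration of points the paper passes over with ``clearly''.
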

\begin{proof}
 \begin{figure}[ht]
        \centering
        \resizebox{0.7\linewidth}{!}{
        \begin{tikzpicture}[scale = 3]
        \draw[thick] (0, 0) to (1, 0);
        \draw[thick] (0.0520968, 0.222222)
        	to[out = 0, in = 90, distance = 4.18879] (0.333333, 0)
        	to[out = 90, in = 90, distance = 2.09439] (0.166667, 0)
        	to[out = -90, in = -90, distance = 4.18879] (0.5, 0)
        	to[out = 90, in = 90, distance = 4.18879] (0.833333, 0)
        	to[out = 90, in = 90, distance = 2.09439] (0.666667, 0)
        	to[out = -90, in = 180, distance = 4.18879] (0.947903, -0.222222);
        \draw[help lines] (0.5, 0) circle (0.5);
        \draw[fill] (0.0520968, 0.222222) circle (0.0166667);
        \draw[fill] (0.947903, -0.222222) circle (0.0166667);
        \draw[fill] (0, 0) circle (0.0166667);
        \draw[fill] (1, 0) circle (0.0166667);
        \coordinate (Aend) at (1.25,0);
        
        \begin{scope}[shift={(2,0)}]
        \draw[thick] (0, 0) to (1, 0);
        \draw[thick] (0.25*8/7, 0)
        	to[out = 90, in = 90, distance = 1.5708] (0.125*8/7, 0)
        	to[out = -90, in = -90, distance = 3.14159] (0.375*8/7, 0)
        	to[out = 90, in = 90, distance = 3.14159] (0.625*8/7, 0)
        	to[out = 90, in = 90, distance = 1.5708] (0.5*8/7, 0)
        	to[out = -90, in = -90, distance = 3.14159] (0.75*8/7, 0)
        	to[out = 90, in = 90, distance = 6.28318] (0.25*8/7, 0);
        \draw[help lines] (0.5, 0) circle (0.5);
        \draw[fill] (0, 0) circle (0.0166667);
        \draw[fill] (1, 0) circle (0.0166667);
        \coordinate (Bstart) at (-0.25,0);
        \end{scope}
        \draw[->, thick] (Aend) -- (Bstart);
        \node (a) at (-0.5, -0) {(a)};
        \end{tikzpicture}
        }
        
        \vspace{0.2cm}
        \resizebox{0.7\linewidth}{!}{
        \begin{tikzpicture}[scale = 3]
        \draw[thick] (0, 0) to (1, 0);
        \draw[thick] (0.0454703, 0.208333)
        	to[out = 0, in = 90, distance = 4.71239] (0.375, 0)
        	to[out = -90, in = -90, distance = 1.5708] (0.25, 0)
        	to[out = 90, in = 90, distance = 1.5708] (0.125, 0)
        	to[out = -90, in = -90, distance = 6.28318] (0.625, 0)
        	to[out = 90, in = 90, distance = 1.5708] (0.75, 0)
        	to[out = -90, in = -90, distance = 1.5708] (0.875, 0)
        	to[out = 90, in = 90, distance = 4.71239] (0.5, 0)
        	to[out = 90, in = 180, distance = 6.28318] (0.95453, 0.208333);
        \draw[help lines] (0.5, 0) circle (0.5);
        \draw[fill] (0.0454703, 0.208333) circle (0.0166667);
        \draw[fill] (0.95453, 0.208333) circle (0.0166667);
        \draw[fill] (0, 0) circle (0.0166667);
        \draw[fill] (1, 0) circle (0.0166667);
        \coordinate (Aend1) at (1.25,0.2);
        \coordinate (Aend2) at (1.25,-0.2);
        
        \begin{scope}[shift={(2, 0.6)}]
        \draw[thick] (0, 0) to (1, 0);
        \draw[thick] (0.3*10/9, 0)
        	to[out = -90, in = -90, distance = 1.25664] (0.2*10/9, 0)
        	to[out = 90, in = 90, distance = 1.25664] (0.1*10/9, 0)
        	to[out = -90, in = -90, distance = 5.02655] (0.5*10/9, 0)
        	to[out = 90, in = 90, distance = 1.25664] (0.6*10/9, 0)
        	to[out = -90, in = -90, distance = 1.25664] (0.7*10/9, 0)
        	to[out = 90, in = 90, distance = 3.76991] (0.4*10/9, 0)
        	to[out = 90, in = 90, distance = 5.02655] (0.8*10/9, 0)
        	to[out = 90, in = 90, distance = 6.28318] (0.3*10/9, 0);
        \draw[help lines] (0.5, 0) circle (0.5);
        \draw[fill] (0, 0) circle (0.0166667);
        \draw[fill] (1, 0) circle (0.0166667);
        \coordinate (Bstart1) at (-0.25,-0.20);
        \end{scope}
        
        \begin{scope}[shift={(2,-0.6)}, yscale=-1]
        \draw[thick] (0, 0) to (1, 0);
        \draw[thick] (0.3*10/9, 0)
        	to[out = -90, in = -90, distance = 1.25664] (0.2*10/9, 0)
        	to[out = 90, in = 90, distance = 1.25664] (0.1*10/9, 0)
        	to[out = -90, in = -90, distance = 5.02655] (0.5*10/9, 0)
        	to[out = 90, in = 90, distance = 1.25664] (0.6*10/9, 0)
        	to[out = -90, in = -90, distance = 1.25664] (0.7*10/9, 0)
        	to[out = 90, in = 90, distance = 3.76991] (0.4*10/9, 0)
        	to[out = 90, in = 90, distance = 5.02655] (0.8*10/9, 0)
        	to[out = 90, in = 90, distance = 6.28318] (0.3*10/9, 0);
        \draw[help lines] (0.5, 0) circle (0.5);
        \draw[fill] (0, 0) circle (0.0166667);
        \draw[fill] (1, 0) circle (0.0166667);
        \coordinate (Bstart2) at (-0.25, -0.20);
        \end{scope}
        \draw[->, thick] (Aend1) -- (Bstart1);
        \draw[->, thick] (Aend2) -- (Bstart2);
        \node (b) at (-0.5, -0) {(b)};
        \end{tikzpicture}
        }
        \caption{Transforming singular meanders into closed singular meanders.}
        \label{fig:open-meanders-to-closed}
    \end{figure}
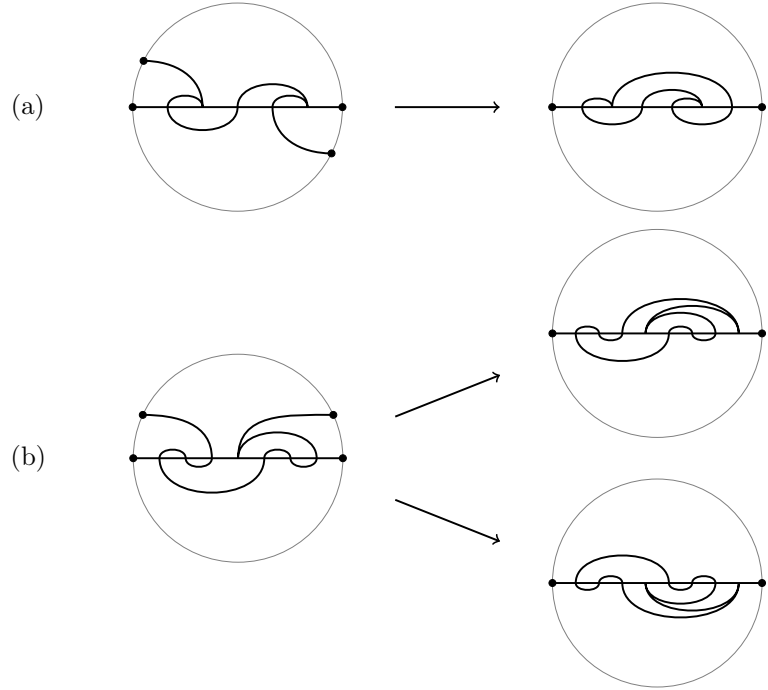
    Let $n$ and $k$ be as in the statement of the theorem. Note that if $k=0$ then $\M_{n,-1} = 0$ and we recover a well-known formula, connecting the numbers of open and closed meanders. 
    
    For each singular meander of order $(n-1, k)$, we can construct a closed singular meander of order $(n, k)$ by connecting the ends of $m$ and adding one more transverse intersection point on the right; see Fig.~\ref{fig:open-meanders-to-closed}~(a). For each singular meander of order $(n, k-1)$, there are two different ways to construct closed singular meanders of order $(n, k)$: either connect the ends of $m$ and add one more non-transverse intersection point on the right, or first reflect along $l$ and then add one more non-transverse intersection point on the right; see Fig.~\ref{fig:open-meanders-to-closed}~(b). (These two closed singular meanders are clearly non-equivalent.) It is clear that non-equivalent singular meanders lead to non-equivalent closed singular meanders. Finally, each closed singular meander can be obtained from a singular meander in this way.
\end{proof}

\begin{thm} \label{thm:main}
    Let $k$ be a positive integer and $n$ be a positive even integer. Then
    $$k \M_{n-1,k} = 2n \M_{n, k-1}.$$
\end{thm}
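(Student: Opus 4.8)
\section*{Proof proposal}

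The plan is to deduce this from Theorem~\ref{thm:closed-and-open} together with a second, ``dual'' identity between closed and open singular meanders. Concretely, I would first prove
$$k\,\Mcl_{n,k}=2(n+k)\,\M_{n,k-1}\qquad (n\text{ even},\ k\ge 1),$$
and then combine it with $\Mcl_{n,k}=\M_{n-1,k}+2\M_{n,k-1}$: writing $k\Mcl_{n,k}=k\M_{n-1,k}+2k\M_{n,k-1}$ and cancelling the common term $2k\M_{n,k-1}$ against the displayed identity yields $k\M_{n-1,k}=2n\M_{n,k-1}$ immediately. (As a sanity check, for $n=2,k=1$ the displayed identity reads $6=6$ via $\Mcl_{2,1}=\M_{1,1}+2\M_{2,0}=6$.)

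The identity $k\Mcl_{n,k}=2(n+k)\M_{n,k-1}$ I would establish bijectively, by \emph{opening a closed singular meander at a tangency}. Given a closed singular meander $\bar M$ of order $(n,k)$ and a tangential intersection $\tau$, cut the circle $m$ at $\tau$ and push the two resulting ends onto $\partial D$; since $\tau$ is a point of tangency the two ends lie on the \emph{same} side of $l$, and because $n$ is even the outcome is a (well-defined, up to equivalence) singular meander $M$ of order $(n,k-1)$ in standard position. Conversely, given $M\in\Mset_{n,k-1}$, the $n+k-1$ intersections cut $l$ into $n+k$ arcs; the inverse operation re-closes $M$ into a closed singular meander of order $(n,k)$ by adjoining to the ends of $m$ a band that touches $l$ exactly once. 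The freedom in the re-closing is precisely a choice among $2(n+k)$ options: into which of the $n+k$ arcs of $l$ the new tangency is placed, together with the side of $l$ from which $m$ touches it --- and here the two sides really do produce non-equivalent closed singular meanders, since for closed singular meanders reflection across $l$ is not an equivalence (cf.\ Figure~\ref{fig:closed-non-equivalent}); this is the source of the factor $2$. Checking that the two constructions are mutually inverse then gives $k\Mcl_{n,k}$ on one side and $2(n+k)\M_{n,k-1}$ on the other.

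The evenness of $n$ is essential and is really the heart of the matter: after opening, $m$ is a proper arc whose endpoints lie on the same boundary semicircle, so any band re-closing it meets $l$ in an \emph{even} number of transverse points; hence the only way to raise the total order by exactly one is to introduce a new \emph{tangency}, the competing resolution (a new transverse intersection) being obstructed by parity. The main obstacle I expect is not this parity input but the exact enumeration of the re-closings: one must show that every $M\in\Mset_{n,k-1}$ admits \emph{exactly} $2(n+k)$ valid re-closings --- the band has to avoid $m$ and meet $l$ in a single tangential point, and one has to verify that no admissible face of $D\setminus(m\cup l)$ is missed and none is double-counted, while also confirming that the resulting configurations have the claimed order in the sense of Definition~\ref{def:order} (i.e.\ that the optimisation over the equivalence class does not lower it). This is a planarity/face-counting statement that I would attack by the Jordan curve theorem together with an Euler-characteristic bookkeeping on the faces adjacent to $l$ and to the endpoints of $m$. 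As an alternative one could look for a direct $k$-to-$2n$ surgery on open singular meanders, turning a marked tangency into a transverse intersection by flipping the portion of $m$ beyond it across $l$; this never leaves the world of open singular meanders, but in my estimation the bookkeeping of the decorations (in particular, accounting for why the factor is $2n$ and not $n$) is at least as delicate as in the closed-meander route.
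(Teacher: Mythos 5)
Your reduction is sound as algebra: the identity $k\,\Mcl_{n,k}=2(n+k)\,\M_{n,k-1}$ is indeed true (it is equivalent to the theorem modulo Theorem~\ref{thm:closed-and-open}), and combining it with $\Mcl_{n,k}=\M_{n-1,k}+2\M_{n,k-1}$ gives the claim. Your parity remark about why evenness of $n$ forces the new intersection to be a tangency is also correct and is exactly where that hypothesis enters in the paper. The gap is in the bijective proof of the displayed identity, and it is fatal in both directions. For the re-closing direction, a band joining the two ends of $m$ that meets $l$ in a single tangential point can never cross $l$, so it stays in one half-disk; avoiding $m$, it is confined to the single complementary face adjacent to both endpoints of $m$, and it can only touch the arcs of $l$ on the boundary of that face, and only from one side. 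Already for the unique open singular meander of order $(2,0)$ this yields exactly \emph{one} admissible band (tangent to the middle arc of $l$ from above), not the required $2\cdot 3=6$; the other five closed meanders of order $(2,1)$ are not obtained from this picture by adjoining a band. So the parametrization ``choice of arc of $l$ times choice of side'' does not enumerate re-closings, and the Euler-characteristic bookkeeping you propose would confirm that the count is usually far smaller than $2(n+k)$. For the opening direction, cutting $m$ at an arbitrary tangency need not produce a meander at all: the two cut ends may lie in a face of $D\setminus(m\cup l)$ that does not meet $\partial D$. For instance, take the closed meander of order $(4,1)$ with five points $1<\dots<5$ on $l$, $m$ running $1\to 5$ above, $5\to 4$ below, $4\to 3$ above, $3\to 2$ above (tangency at $3$), $2\to 1$ below; the tangency at $3$ is trapped under the arc $1\to 5$, and its cut ends cannot be pushed to the boundary without creating new intersections.

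The missing idea is the cyclic symmetry. The paper lets $\Z/(n+k)\Z$ act on $\Mclset_{n,k}$ by moving the leftmost intersection to the rightmost position, and counts each orbit $O$ (of size $(n+k)/d_O$) twice: exactly $n/d_O$ of its elements have a transverse rightmost intersection (hence come from an $(n-1,k)$ meander, uniquely) and $k/d_O$ have a tangential one (hence come from an $(n,k-1)$ meander, in two ways); summing $n/d_O$ and $k/d_O$ over orbits gives $\M_{n-1,k}$ and $2\M_{n,k-1}$ and hence the ratio $n:k$. In your language: the $n+k$ re-closings of a fixed open meander are indexed by the \emph{rotation amount} applied before re-closing at the rightmost position (where opening and closing are always well defined, since the face containing $p_4$ meets $\partial D$), not by a choice of arc of $l$ in a fixed drawing. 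Without the rotation, neither the well-definedness of opening nor the count $2(n+k)$ can be salvaged, so as written the argument does not go through.
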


\begin{proof}    
    Let $n$ and $k$ be as in the theorem.     
    The finite group $\mathbb{Z}/(n+k)\mathbb{Z}$ acts on $\Mclset_{n,k}$ by cyclically moving the leftmost intersection point to the right; see Fig.~\ref{fig:permutation-group-action}. For a closed singular meander $M$, let $O$ be its orbit under this action. Then $O$ contains $\frac{n+k}{d_O}$ elements, where $d_O \geqslant 1$ denotes the cardinality of the stabilizer of any element in $O$ (for instance, in Fig.~\ref{fig:permutation-group-action} we have $d_O = 2$). Moreover, among these elements, exactly $\frac{n}{d_O}$ are obtained from singular meanders of order $(n-1, k)$ and $\frac{k}{d_O}$ are obtained from singular meanders of order $(n, k-1)$. Thus
    \begin{align*}
        \M_{n-1, k} &= \sum_{O} \frac{n}{d_O},\\
        2\M_{n, k-1} &= \sum_{O} \frac{k}{d_O},\\
        \frac{\M_{n-1, k}}{n} &= \frac{2\M_{n, k-1}}{k},
    \end{align*}
    where the sums are taken over all orbits $O$. Hence $k \M_{n-1,k} = 2n \M_{n, k-1}$, as claimed. \qedhere

    \begin{figure}[ht]
        \newcommand \localshift {1.3}
        \newcommand \dist{0.4} 
        \newcommand \anglearrow {40}
        \newcommand \arrowshift {0.5}
        \centering
        \resizebox{0.6\linewidth}{!}{
        \begin{tikzpicture}[scale = 3]
        \draw[thick] (0, 0) to (1, 0);
        \draw[thick] (0.25*8/7, 0)
        	to[out = 90, in = 90, distance = 4.71239] (0.625*8/7, 0)
        	to[out = 90, in = 90, distance = 3.14159] (0.375*8/7, 0)
        	to[out = -90, in = -90, distance = 3.14159] (0.125*8/7, 0)
        	to[out = -90, in = -90, distance = 4.71239] (0.5*8/7, 0)
        	to[out = -90, in = -90, distance = 3.14159] (0.75*8/7, 0)
        	to[out = 90, in = 90, distance = 6.28318] (0.25*8/7, 0);
        \draw[help lines] (0.5, 0) circle (0.5);
        \draw[fill] (0, 0) circle (0.0166667);
        \draw[fill] (1, 0) circle (0.0166667);
        
        \draw [line width = 2pt, ->] 
            (0.5 + 0.5*\dist  + 0.86603*\arrowshift, 0 - 0.86603*\dist + 0.5*\arrowshift) 
            to [out = -60 + \anglearrow, in = 120 - \anglearrow, looseness = 1.3] 
            (0.5 + 0.57735*\localshift - 0.50000*\dist + 0.86603*\arrowshift, -\localshift + 0.86603*\dist+ 0.5*\arrowshift);
        \draw [line width = 2pt, ->] 
            (0.5 + 0.57735*\localshift - 1.00000*\dist, -\localshift - \arrowshift) 
            to [out = -180 + \anglearrow, in = 0 - \anglearrow, looseness = 1.3]
            (0.5 - 0.57735*\localshift + 1.00000*\dist, -\localshift - \arrowshift);
        \draw [line width = 2pt, ->] 
            (0.5 - 0.57735*\localshift + 0.50000*\dist - 0.86603*\arrowshift, -\localshift + 0.86603*\dist+ 0.5*\arrowshift) 
            to [out = 60 + \anglearrow, in = -120 - \anglearrow, looseness = 1.3]
            (0.5 - 0.50000*\dist - 0.86603*\arrowshift, 0 - 0.86603*\dist+ 0.5*\arrowshift);

        \begin{scope}[shift={(0.57735*\localshift, -\localshift)}]
        \draw[thick] (0, 0) to (1, 0);
        \draw[thick] (0.125*8/7, 0)
        	to[out = 90, in = 90, distance = 4.71239] (0.5*8/7, 0)
        	to[out = 90, in = 90, distance = 3.14159] (0.25*8/7, 0)
        	to[out = -90, in = -90, distance = 6.28318] (0.75*8/7, 0)
        	to[out = -90, in = -90, distance = 4.71239] (0.375*8/7, 0)
        	to[out = -90, in = -90, distance = 3.14159] (0.625*8/7, 0)
        	to[out = 90, in = 90, distance = 6.28318] (0.125*8/7, 0);
        \draw[help lines] (0.5, 0) circle (0.5);
        \draw[fill] (0, 0) circle (0.0166667);
        \draw[fill] (1, 0) circle (0.0166667);
        \end{scope}
        
        \begin{scope}[shift={(-0.57735*\localshift, -\localshift)}]
        \draw[thick] (0, 0) to (1, 0);
        \draw[thick] (0.125*8/7, 0)
        	to[out = -90, in = -90, distance = 6.28318] (0.625*8/7, 0)
        	to[out = -90, in = -90, distance = 4.71239] (0.25*8/7, 0)
        	to[out = -90, in = -90, distance = 3.14159] (0.5*8/7, 0)
        	to[out = 90, in = 90, distance = 3.14159] (0.75*8/7, 0)
        	to[out = 90, in = 90, distance = 4.71239] (0.375*8/7, 0)
        	to[out = 90, in = 90, distance = 3.14159] (0.125*8/7, 0);
        \draw[help lines] (0.5, 0) circle (0.5);
        \draw[fill] (0, 0) circle (0.0166667);
        \draw[fill] (1, 0) circle (0.0166667);
        \end{scope}
        \end{tikzpicture}
        }
        \caption{Action of the group $\mathbb{Z}/(n+k)\mathbb{Z}$ on closed singular meanders.}
        \label{fig:permutation-group-action}
    \end{figure}
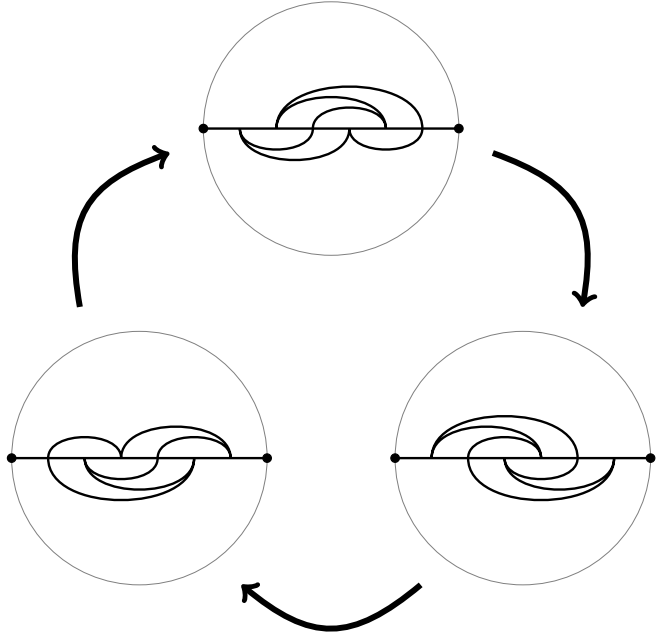
\end{proof}

\begin{remark}\label{rem:gf-derivative}
    The statement of Theorem~\ref{thm:main} can be reformulated as follows. Let
    $$
        \phi(x,t) := \sum_{n,k \geqslant 0} \M_{n,k} x^n t^k
    $$
    be the generating function for $\{\M_{n,k}\}_{n,k \geqslant 0}$, and let
    $$
        \phi_{(odd)}(x,t) := \sum_{n,k \geqslant 0} \M_{2n+1,k} x^{2n+1} t^k
        \quad\text{and}\quad
        \phi_{(even)}(x,t) := \sum_{n,k \geqslant 0} \M_{2n,k} x^{2n} t^k
    $$
    be its odd and even parts with respect to the variable $x$. Then 
    $$
        2 \,\partial_x \phi_{(even)}(x,t) = \partial_t \phi_{(odd)}(x,t).
    $$
\end{remark}


\begin{figure}[ht]
    \centering
    \includegraphics[width=0.7\linewidth]{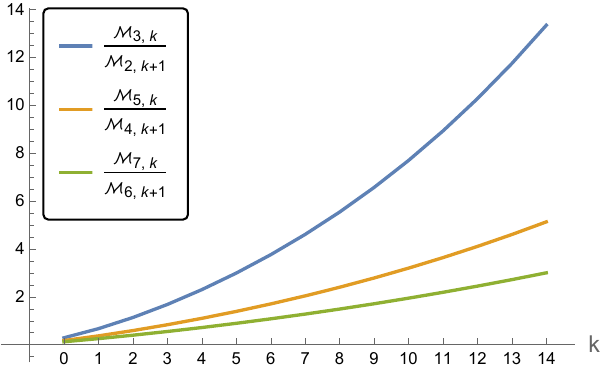}
    \caption{Numerical behavior of the quotient $\M_{n,k}/\M_{n-1,k+1}$ for a fixed odd $n$.}
    \label{fig:meander-quotient}
\end{figure}

\begin{remark}
    Although for any fixed odd $n$ the quotient $\frac{\M_{n,k-1}}{\M_{n-1,k}}$ is well approximated by a third-degree polynomial in $k$ (see Fig.~\ref{fig:meander-quotient}), we were not able to find an exact relation.
\end{remark}

\begin{corollary}\label{cor:main-irreducible}
    Let $k$ be a positive integer and $n$ be a positive even integer. Then
    $$k \Mirr_{n-1,k} = 2n \Mirr_{n, k-1}.$$
\end{corollary}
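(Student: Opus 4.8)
The plan is to replay the proof of Theorem~\ref{thm:main} with every singular meander replaced by an irreducible one, the extra ingredient being that the constructions relating open and closed singular meanders preserve irreducibility. I first transfer irreducibility to closed singular meanders: if $M$ is a closed singular meander of order $(n,k)$ drawn according to our conventions, then deleting its rightmost intersection point and opening $m$ there inverts one of the two constructions of Fig.~\ref{fig:open-meanders-to-closed} and produces a well-defined open singular meander $M^{\circ}$, of order $(n-1,k)$ when the rightmost point is transverse and of order $(n,k-1)$ when it is non-transverse; I call $M$ \emph{irreducible} when $M^{\circ}$ is. The correspondence used in the proof of Theorem~\ref{thm:closed-and-open} then restricts to a bijection between the irreducible closed singular meanders of order $(n,k)$ on the one hand and the irreducible open singular meanders of order $(n-1,k)$ together with two copies of those of order $(n,k-1)$ on the other: every irreducible open singular meander is recovered from a closed one as above, and both closed meanders built from a given open meander via the construction of Fig.~\ref{fig:open-meanders-to-closed}~(b) are irreducible precisely when that open meander is, since reflecting an open singular meander along $l$ yields an equivalent --- hence equally irreducible --- open singular meander.

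The step that requires real work is to show that the $\mathbb{Z}/(n+k)\mathbb{Z}$-action on $\Mclset_{n,k}$ used in the proof of Theorem~\ref{thm:main} preserves irreducibility, so that it restricts to an action on the set of irreducible closed singular meanders. Since applying a generator changes which intersection point of $M$ is rightmost, this amounts to proving that the irreducibility of $M^{\circ}$ does not depend on the intersection point at which $M$ is opened. I would establish this by exhibiting, for any two adjacent intersection points of $M$, a bijection between the sets of pairwise non-equivalent submeanders of the two resulting open meanders that is compatible with the induced cyclic relabelling of intersection points; the submeander counts then coincide, so the two openings are simultaneously irreducible or not. \textbf{This is the main obstacle:} a submeander is defined (Definition~\ref{def:submeander}) relative to the marked points $p_1,\dots,p_4$, which are completely different for the two openings of $M$, so the invariance of the submeander count under re-opening is not a formality and should be deduced from the combinatorial description of submeanders as the \emph{intervals} of a singular meander in~\cite{B24}.

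Granting this invariance, the argument finishes exactly as in Theorem~\ref{thm:main}. The group $\mathbb{Z}/(n+k)\mathbb{Z}$ acts on the set of irreducible closed singular meanders of order $(n,k)$; for an orbit $O$ with stabilizer of cardinality $d_O$, the cyclic sequence of intersection types of any member of $O$ has period $\tfrac{n+k}{d_O}$, so among the $\tfrac{n+k}{d_O}$ members of $O$ exactly $\tfrac{n}{d_O}$ have a transverse rightmost point and $\tfrac{k}{d_O}$ a non-transverse one. By the restricted bijection of the first paragraph, the irreducible closed singular meanders with transverse rightmost point are in bijection with the irreducible open singular meanders of order $(n-1,k)$, while those with non-transverse rightmost point form a $2$-to-$1$ cover of the irreducible open singular meanders of order $(n,k-1)$; summing over all orbits of irreducible closed singular meanders thus yields
$$
\Mirr_{n-1,k}=\sum_{O}\frac{n}{d_O},\qquad 2\,\Mirr_{n,k-1}=\sum_{O}\frac{k}{d_O}.
$$
Dividing the first identity by $n$ and the second by $k$ shows that both equal $\sum_{O}\tfrac{1}{d_O}$, whence $k\,\Mirr_{n-1,k}=2n\,\Mirr_{n,k-1}$. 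The boundary case needs no separate treatment: the orders $(n-1,k)$ and $(n,k-1)$ share the total order $n+k-1$, so both sides vanish together whenever $n+k\le 3$.
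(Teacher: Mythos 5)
Your proposal follows essentially the same route as the paper: the paper's proof simply asserts that the group action from Theorem~\ref{thm:main} preserves irreducibility (in the sense that all closed singular meanders in an orbit are simultaneously images of irreducible ones) and then reruns the orbit count, which is exactly your argument. You are more explicit than the paper about the one nontrivial point --- that irreducibility of the opened meander is independent of the intersection point at which the closed meander is opened --- which the paper dismisses as ``easy to see,'' so your write-up is, if anything, more careful about where the real content lies.
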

\begin{proof}
    It is easy to see that the group action described in the proof of Theorem~\ref{thm:main} preserves irreducibility in the following sense: if a closed singular meander $M$ is an image of an irreducible singular meander, then all other closed singular meanders in the orbit of $M$ are also images of irreducible singular meanders (see the example in Fig.~\ref{fig:permutation-group-action}).
\end{proof}

\begin{corollary}
    Let $n$ be a positive even integer and $k$ a positive integer coprime to $n$. Then
    \begin{align*}
        &\M_{n-1, k} \equiv 0 \pmod{2n},\\
        &\M_{n, k-1} \equiv 0 \pmod{k},\\
        &\Mirr_{n-1, k} \equiv 0 \pmod{2n},\\
        &\Mirr_{n, k-1} \equiv 0 \pmod{k}.
    \end{align*}
\end{corollary}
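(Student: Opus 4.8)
The plan is to deduce all four congruences directly from the multiplicative identities already established: $k\M_{n-1,k} = 2n\M_{n,k-1}$ from Theorem~\ref{thm:main}, and $k\Mirr_{n-1,k} = 2n\Mirr_{n,k-1}$ from Corollary~\ref{cor:main-irreducible}. No new combinatorics is needed; the whole argument is an elementary divisibility computation, so I expect it to be short.

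First I would record the key arithmetic observation. Since $n$ is even and $\gcd(n,k)=1$, the integer $k$ cannot be even, so $k$ is odd; in particular $\gcd(2,k)=1$, and combining this with $\gcd(n,k)=1$ gives $\gcd(2n,k)=1$. This is the one step that genuinely uses the hypothesis and is the closest thing to an obstacle — namely, the need to upgrade ``coprime to $n$'' to ``coprime to $2n$'' by tracking the parity of $k$ — but it is entirely routine.

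With $\gcd(2n,k)=1$ in hand, I would conclude as follows. The identity $k\M_{n-1,k}=2n\M_{n,k-1}$ shows that $2n$ divides $k\M_{n-1,k}$; since $\gcd(2n,k)=1$, Euclid's lemma gives $2n\mid\M_{n-1,k}$. The same identity shows that $k$ divides $2n\M_{n,k-1}$, and $\gcd(k,2n)=1$ then forces $k\mid\M_{n,k-1}$. Repeating this argument verbatim with $\Mirr$ in place of $\M$, using Corollary~\ref{cor:main-irreducible} instead of Theorem~\ref{thm:main}, yields $2n\mid\Mirr_{n-1,k}$ and $k\mid\Mirr_{n,k-1}$. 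This establishes all four stated congruences, completing the proof.
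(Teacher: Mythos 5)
Your argument is correct and is exactly the deduction the paper intends (it states this corollary without proof as an immediate consequence of Theorem~\ref{thm:main} and Corollary~\ref{cor:main-irreducible}): the key observation that $n$ even plus $\gcd(n,k)=1$ forces $k$ odd, hence $\gcd(2n,k)=1$, is the only nontrivial step, and Euclid's lemma then gives all four divisibilities.
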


\section{Some families of singular meanders with a complete enumeration} \label{sec:known-families}
\subsection{Singular meanders with a small number of transverse intersections}
\begin{thm}\label{thm:m1k}
    $$
    \sum_{k \geqslant 0} \M_{1, k} t^k = \frac{1}{(1 - 2t)\sqrt{1 - 4t}}.
    $$
\end{thm}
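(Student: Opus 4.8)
The plan is to reduce a singular meander of order $(1,k)$ to combinatorial data, enumerate that data, and then evaluate the resulting generating function.

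\emph{Structural reduction.} Work with the standard representative, so $m$ meets $l$ transversely in a single point $c$ and tangentially in $k$ further points. Following $m$ from $p_1$, each tangency keeps $m$ on its current side of $l$ while the crossing at $c$ flips it exactly once; since $p_1$ lies above $l$ and $p_3\notin l$, this forces $p_3$ to lie below $l$, and $m\setminus\{c\}$ decomposes as $\gamma^{+}\cup\gamma^{-}$, where $\gamma^{+}$ is a simple arc in the closed upper half-disk from $p_1$ to $c$ and $\gamma^{-}$ a simple arc in the closed lower half-disk from $c$ to $p_3$. I would then check that the equivalence class of $M$ is determined, without redundancy, by: the position of $c$ among the $k+1$ marked points of $l$ — equivalently the number $\lambda$ of tangencies to the left of $c$ (with $\rho:=k-\lambda$ to the right); for each tangency point, whether it lies on $\gamma^{+}$ or on $\gamma^{-}$; and the isotopy classes (rel the marked points of $l$) of $\gamma^{+}$ and $\gamma^{-}$. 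Completeness uses that an orientation-preserving self-homeomorphism of the disk fixing $p_1,\dots,p_4$ preserves $l$, each of its two sides, and the order along it, while no orientation-reversing self-homeomorphism can fix all four of $p_1,\dots,p_4$.

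\emph{Enumeration.} Cutting $\gamma^{+}$ at the points where it touches $l$ presents it, up to isotopy, as a non-crossing Hamiltonian path from $p_1$ to $c$ on the $k_1+2$ boundary points $\{p_1\}\cup(\text{its tangency points})\cup\{c\}$, which lie on the boundary of the upper half-disk in the cyclic order $p_1,\ (\alpha\ \text{points}),\ c,\ (\beta\ \text{points})$ with $\alpha+\beta=k_1$, where $\alpha$ tangencies are left of $c$. A short induction gives the number of non-crossing Hamiltonian paths between the first and the $j$-th vertex of a convex $n$-gon as $\binom{n-2}{j-2}$; this yields $\binom{\alpha+\beta}{\alpha}$ choices of $\gamma^{+}$, and symmetrically $\binom{(\lambda-\alpha)+(\rho-\beta)}{\rho-\beta}$ choices of $\gamma^{-}$. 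Choosing which of the $\lambda$ left (resp.\ $\rho$ right) tangencies lie on $\gamma^{+}$ contributes $\binom{\lambda}{\alpha}\binom{\rho}{\beta}$, so with $\ell_u=\alpha,\ \ell_d=\lambda-\alpha,\ r_u=\beta,\ r_d=\rho-\beta$,
$$
\M_{1,k}\;=\;\sum_{\ell_u+\ell_d+r_u+r_d=k}\binom{\ell_u+\ell_d}{\ell_u}\binom{r_u+r_d}{r_u}\binom{\ell_u+r_u}{\ell_u}\binom{\ell_d+r_d}{r_d}.
$$

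\emph{Generating function, and the obstacle.} It remains to show that the generating function of this fourfold binomial sum is $\frac{1}{(1-2t)\sqrt{1-4t}}$. I would do this by summing the inner variables one at a time via $\sum_{n\geqslant0}\binom{n+a}{a}x^{n}=(1-x)^{-a-1}$ and geometric series; equivalently, one extracts from the displayed formula the recurrence $\M_{1,k}=2\M_{1,k-1}+\binom{2k}{k}$ for $k\geqslant1$ (with $\M_{1,0}=1$), which at once gives $(1-2t)\sum_{k\geqslant0}\M_{1,k}t^{k}=\sum_{k\geqslant0}\binom{2k}{k}t^{k}=(1-4t)^{-1/2}$, and hence the theorem. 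I expect this last, purely algebraic step to be the main nuisance: the reduction and enumeration are essentially mechanical, whereas collapsing the fourfold sum to $\frac{1}{(1-2t)\sqrt{1-4t}}$ needs a careful choice of grouping — or, more appealingly, a bijection of the indexing set with pairs consisting of a free $\pm1$-walk of length $m$ and a balanced $\pm1$-walk of length $2n$ with $m+n=k$ — which I would try to pin down so as to keep the computation short.
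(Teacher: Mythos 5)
Your proposal is correct and takes essentially the same route as the paper: the paper isotopes $l\cup m$ onto a ``carcass'' of $k$ chords distributed among the four quadrants and arrives at exactly the fourfold sum $\sum\binom{x_1+x_2}{x_1}\binom{x_2+x_3}{x_2}\binom{x_3+x_4}{x_3}\binom{x_4+x_1}{x_4}$ that you obtain, your upper/lower-arc decomposition with non-crossing Hamiltonian path counts being the same classification (tangencies sorted by quadrant, plus the two interleavings along each half-axis) in different language. The closing algebraic identification with $\frac{1}{(1-2t)\sqrt{1-4t}}$ is simply asserted in the paper, so your sketched route via the recurrence $\M_{1,k}=2\M_{1,k-1}+\binom{2k}{k}$ (which is consistent with the tabulated values) is, if anything, more explicit.
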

\begin{proof}
    Let $M = (D, (p_1, p_2, p_3, p_4), (l, m))$ be a singular meander of order $(1, k)$. Without loss of generality, we may assume that (see Fig.~\ref{fig:m1k}~(a)) 
    \begin{enumerate}
        \item $D$ is the Euclidean disk $\{(x,y) \in \R^2 \mid x^2 + y^2 \leqslant 1\}$;
        \item $p_1 = (0,1)$;
        \item $p_2 = (-1,0)$;
        \item $p_3 = (0,-1)$;
        \item $p_4 = (1,0)$.        
    \end{enumerate}
    After performing an isotopy, we may assume that $l \cup m$ is contained in a small neighborhood of 
    $$
        C_M := I_X \cup I_Y \cup \bigcup_{i=1}^k I_i,
    $$
    where $I_X = \{(t,0) \mid t \in [-1;\,1]\}$, $I_Y = \{(0,t) \mid t \in [-1;\,1]\}$, and $\{I_i\}$ is a set of pairwise disjoint segments (which we call \emph{chords}) with one endpoint on $I_X$ and the other endpoint on $I_Y$; see Fig.~\ref{fig:m1k}~(b). We call such $C_M$ a \emph{carcass} (with $k$ chords); see Fig.~\ref{fig:m1k}~(c).

    \begin{figure}[ht]
        \centering
        \resizebox{0.9\linewidth}{!}{
        \begin{tikzpicture}[scale = 3]
            \draw[thick] (0, 0) to (1, 0);
            \draw[thick] (0.5, 0.5)
            	to[out = -90, in = 90, looseness = 2] (0.125, 0)
            	to[out = 90, in = 90, distance = 3.14159] (0.375, 0)
            	to[out = 90, in = 90, distance = 1.5708] (0.5, 0)
            	to[out = 90, in = 90, distance = 1.5708] (0.625, 0)
            	to[out = -90, in = -90, distance = 4.71239] (0.25, 0)
            	to[out = -90, in = -90, distance = 6.28318] (0.75, 0)
            	to[out = -90, in = -90, distance = 1.5708] (0.875, 0)
            	to[out = -90, in = 90, looseness = 2] (0.5, -0.5);
            \draw[help lines] (0.5, 0) circle (0.5);
            \draw[fill] (0.5, 0.5) circle (0.0166667);
            \draw[fill] (0.5, -0.5) circle (0.0166667);
            \draw[fill] (0, 0) circle (0.0166667);
            \draw[fill] (1, 0) circle (0.0166667);
            \node (a) at (0.5, -0.75) {(a)};

        \begin{scope}[shift = {(1.25, 0)}]
            \draw[thick] (0, 0) to (1, 0);
            \draw [thick] (0.5, 0.5) 
                to (0.5, 0.5-0.0714)
                to (0.1, -0.0)
                to (0.5, 0.5 - 1.5*0.0714)
                to (0.5, 0.5 - 3*0.0714)
                to (0.3, -0.0)
                to (0.5, 0.5 - 3.5*0.0714)
                to (0.5, 0.5 - 5*0.0714)
                to (0.4, 0.0)
                to (0.5, 0.5 - 5.5*0.0714)
                to (0.5, 0)
                to (0.5, -1.5*0.0714)
                to (0.2, -0.0)
                to (0.5, -2*0.0714)
                to (0.5, -3*0.0714)
                to (0.5 + 0.1667, 0.0)
                to (0.5, -3.5*0.0714)
                to (0.5, -5*0.0714)
                to (0.5 + 2*0.1667, 0.0)
                to (0.5, -5.5*0.0714)
                to (0.5, -0.5);                

            \draw[help lines] (0.5, 0) circle (0.5);
            \draw[fill] (0.5, 0.5) circle (0.0166667);
            \draw[fill] (0.5, -0.5) circle (0.0166667);
            \draw[fill] (0, 0) circle (0.0166667);
            \draw[fill] (1, 0) circle (0.0166667);
            \node (a) at (0.5, -0.75) {(b)};
        \end{scope}

        \begin{scope}[shift = {(2.5, 0)}]
        \draw[help lines] (0.5, 0) circle (0.5);
        
        \draw[thick] (0, 0) to (1, 0);
        \draw[thick] (0.5, 0.5) to (0.5, -0.5);

        \draw[thick] (0.1, 0) to (0.5, 0.375);
        \draw[thick] (0.2, 0) to (0.5, -0.125);
        \draw[thick] (0.3, 0) to (0.5, 0.25);
        \draw[thick] (0.4, 0) to (0.5, 0.125);
        \draw[thick] (0.6667, 0) to (0.5, -0.25);
        \draw[thick] (0.8334, 0) to (0.5, -0.375);
        \node (a) at (0.5, -0.75) {(c)};        
        \end{scope}
        
        \end{tikzpicture}
        }
        \caption{Example of a singular meander of order $(1,6)$ and its corresponding carcass.}
        \label{fig:m1k}
    \end{figure}
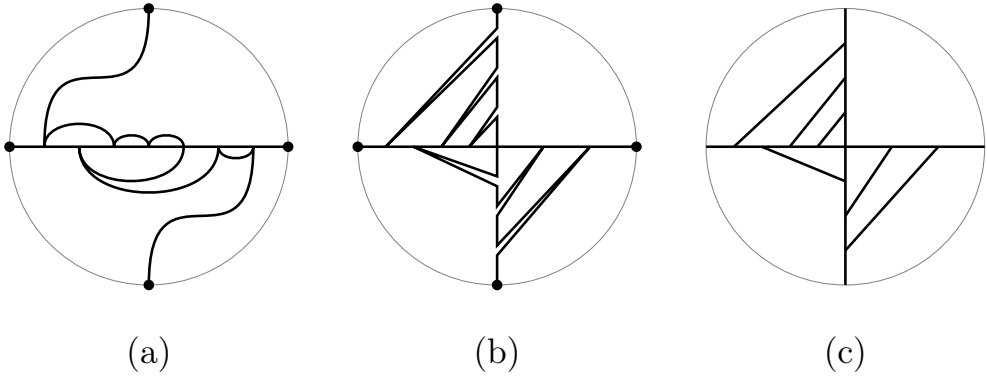
    
    The carcass is uniquely defined up to isotopy of $D$ that keeps $I_X \cup I_Y$ setwise fixed. Non-equivalent singular meanders lead to non-isotopic carcasses. Thus the number of non-equivalent singular meanders of order $(1,k)$ equals the number of non-isotopic carcasses with $k$ chords. The number of carcasses with $k$ chords is given by
    $$
    \sum_{\substack{x_1, x_2, x_3, x_4 \geqslant 0\\x_1 + x_2 + x_3 + x_4 = k}} 
    \binom{x_1 + x_2}{x_1}\binom{x_2 + x_3}{x_2}\binom{x_3 + x_4}{x_3}\binom{x_4 + x_1}{x_4},
    $$
    where $x_i$ is the number of chords lying in the $i$-th quadrant (for example, in Fig.~\ref{fig:m1k}~(c) $x_1 = 0$, $x_2 = 3$, $x_3 = 1$, and $x_4 = 2$), and $\binom{x_1 + x_2}{x_1}$ corresponds to all possible arrangements of the endpoints of chords on the segment
    $\{(0,t) \mid t \in (0,1)\}$, with the other binomial coefficients interpreted analogously. Thus
    \begin{align*}
    \sum_{k \geqslant 0} \M_{1, k} t^k
        &= \sum_{k \geqslant 0}\sum_{\substack{x_1, x_2, x_3, x_4 \geqslant 0\\x_1 + x_2 + x_3 + x_4 = k}} 
           \binom{x_1 + x_2}{x_1}\binom{x_2 + x_3}{x_2}\binom{x_3 + x_4}{x_3}\binom{x_4 + x_1}{x_4} \, t^k \\
        &= \frac{1}{(1 - 2t)\sqrt{1 - 4t}}.
    \end{align*}    
\end{proof}

\begin{remark}
    The sequence $\{\M_{1,k}\}_{k \geqslant 0}$ coincides with the OEIS sequence \oeislink{A082590}~\cite{oeis}, which counts the number of $k$-letter words over the alphabet $\{1, 2, 3, 4\}$ having as many occurrences of the substring (consecutive subword) $[1, 2]$ as of $[1, 3]$. An explicit bijection between equivalence classes of singular meanders with a single transverse intersection and such words will be given in a subsequent paper (together with several other interpretations of singular meanders with a single transverse intersection). 
\end{remark}

\begin{remark}
    A technique similar to that in the proof of Theorem~\ref{thm:m1k} can be applied to obtain a formula for $\M_{n,k}$ for other values of $n$. However, the formulas quickly become too cumbersome and cannot be easily expressed via generating functions. For example,
    $$
        \M_{3, k}
        = 2\sum_{\substack{x_0,\dots,x_3 \geqslant 0 \\ y_0,\dots,y_3 \geqslant 0 \\ u_0,\dots,u_3 \geqslant 0 \\ \sum_{i=0}^3(x_i + y_i + u_i)=k}}
        \left(
        \prod_{i=0}^3
        \binom{x_i + x_{i+1} + u_i}{u_i}
        \prod_{i=0}^3
        \binom{y_i + y_{i+1} + u_i}{u_i}
        \right),
    $$
    where the indices of $x_i$ and $y_i$ are considered modulo $4$.
\end{remark}

\begin{thm}
    $$
    \sum_{k\geqslant 0} \M_{2, k}t^k = \frac{1-3 t}{(1-2 t)^2\sqrt{(1-4 t)^3}}.
    $$
\end{thm}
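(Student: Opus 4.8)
The plan is to follow the proof of Theorem~\ref{thm:m1k} as closely as possible: reduce to counting \emph{carcasses}, read off an explicit multiple sum of products of binomial coefficients, and then evaluate its generating function.

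\emph{Reduction to carcasses.} First I would check that, with the drawing conventions of Section~\ref{sec:definitions}, every singular meander of order $(2,k)$ has, up to equivalence, the same skeleton. The condition that $p_3,p_4$ lie on one side of $\{p_1,p_2\}$ forces the cyclic order on $\partial D$ to be $(p_1,p_2,p_3,p_4)$; the arc $m$ meets $l$ transversely in two points which, using only that the two arcs of $m$ lying above $l$ must be disjoint, occur in a forced order $A,B$ along $l$; then $m$ consists of two arcs above $l$ --- joining $p_1$ to $A$ and $B$ to $p_3$ --- and one arc below $l$ joining $A$ to $B$, with no further freedom. Consequently, after an isotopy, $l\cup m$ lies in a small neighbourhood of this skeleton together with $k$ pairwise disjoint \emph{chords}, each joining an arc of $l$ to an arc of $m$ inside one of the five regions $R_1,\dots,R_5$ into which the skeleton divides $D$. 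Exactly as in Theorem~\ref{thm:m1k}, the resulting carcass is well defined up to isotopy fixing the skeleton setwise, inequivalent meanders give non-isotopic carcasses, and every carcass arises; so $\M_{2,k}$ equals the number of carcasses with $k$ chords.

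\emph{The binomial sum.} Next I would count the carcasses with a prescribed distribution of chords by recording, on each of the six skeleton arcs $l_1,m_1,l_2,m_2,l_3,m_3$ (three on $l$, three on $m$), the order in which the chord feet on it appear. Within a single region the chords are mutually non-crossing in a unique way; in the two ``three-sided'' regions --- the one bounded by $l_2,m_1,m_3$ and the one bounded by $l_1,l_3,m_2$ --- the feet of the two chord types may not interleave on the shared arc; so the only genuine freedom is the interleaving of the feet coming from the two sides of a given arc. Writing $a$ for the number of chords in the region bounded by $l_1,m_1$, $b$ and $c$ for the numbers of $l_2$--$m_1$ and $l_2$--$m_3$ chords, $d$ and $e$ for the numbers of $l_1$--$m_2$ and $l_3$--$m_2$ chords, $f$ for the number of $l_2$--$m_2$ chords, and $g$ for the number of chords in the region bounded by $l_3,m_3$, the arcs $l_1,m_1,l_2,m_2,l_3,m_3$ contribute, in that order, the factors of
$$
\M_{2,k}=\sum_{\substack{a,b,c,d,e,f,g\ge 0\\ a+b+c+d+e+f+g=k}}
\binom{a+d}{a}\binom{a+b}{a}\binom{b+c+f}{f}\binom{d+e+f}{f}\binom{e+g}{g}\binom{c+g}{c};
$$
one checks that this reproduces the values $1,7,36,166,\dots$ of Table~\ref{tab:mnk}.

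\emph{The generating function.} Encoding each binomial by $\binom{m}{j}=[z^j]\,(1+z)^m$ turns $\sum_{k\ge 0}\M_{2,k}t^k$ into a constant-term extraction, in six auxiliary variables, from an explicit rational function; I would then integrate the variables out one at a time (each step reducing to a residue, as for small $t$ only the pole tending to $0$ is captured), arriving at a function of $t$ alone which one identifies with $\frac{1-3t}{(1-2t)^2\sqrt{(1-4t)^3}}$. A cleaner route would be to establish instead the factorisation
$$
\sum_{k\ge 0}\M_{2,k}t^k=\frac12\Bigl(\frac1{1-2t}+\frac1{1-4t}\Bigr)\sum_{k\ge 0}\M_{1,k}t^k,
$$
whose right-hand side equals the claimed closed form by Theorem~\ref{thm:m1k}; this identity should follow from the displayed sum by first performing the sums over the ``central'' indices $a$, $f$, $g$ and matching the outcome with the four-fold cyclic sum of Theorem~\ref{thm:m1k}. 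The reduction step is routine, being the $n=2$ analogue of the carcass argument already carried out for $\M_{1,k}$; the hard part will be this last step, since the seven-fold sum is far less symmetric than the cyclic sum for $n=1$, so pushing it to the stated closed form will require a careful iterated-residue computation --- keeping track of which poles are captured and of where the factor $\sqrt{1-4t}$ comes from --- or the partial resummation behind the factorisation above. Comparing the first coefficients against Table~\ref{tab:mnk} is a good sanity check at every stage.
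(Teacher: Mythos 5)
Your reduction to carcasses and the resulting seven-fold binomial sum look correct (the skeleton of a $(2,0)$-meander is indeed unique, the forced non-interleaving of the two chord types inside each three-sided region is right, and your sum reproduces $7$ and $36$ for $k=1,2$), but the proof as written has a genuine gap exactly where you flag it: the passage from the sum
$$
\sum\binom{a+d}{a}\binom{a+b}{a}\binom{b+c+f}{f}\binom{d+e+f}{f}\binom{e+g}{g}\binom{c+g}{c}
$$
to the closed form $\frac{1-3t}{(1-2t)^2\sqrt{(1-4t)^3}}$ is only described (``a careful iterated-residue computation'' or an unproved factorisation identity), not carried out. Since that evaluation is the entire analytic content of the theorem, deferring it means the statement is not actually proved; the paper's own remark after Theorem~\ref{thm:m1k} warns precisely that these multi-index sums ``quickly become too cumbersome,'' and indeed your sum has no evident cyclic symmetry to exploit as in the $n=1$ case.

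The paper avoids all of this: it applies Theorem~\ref{thm:main} with $n=2$ to get $k\,\M_{1,k}=4\,\M_{2,k-1}$, i.e.\ $\M_{2,k}=\tfrac{k+1}{4}\M_{1,k+1}$, so the desired generating function is $\tfrac14\,\partial_t$ of the one from Theorem~\ref{thm:m1k}, and a one-line differentiation of $\frac{1}{(1-2t)\sqrt{1-4t}}$ gives $\frac{1-3t}{(1-2t)^2\sqrt{(1-4t)^3}}$. (This also explains structurally why your conjectured factorisation holds, since $\tfrac14 C'(t)=\tfrac12\bigl(\tfrac1{1-2t}+\tfrac1{1-4t}\bigr)C(t)$ for this particular $C$.) If you want to salvage your approach as an independent proof, the missing evaluation must be done in full; otherwise the efficient route is to invoke Theorem~\ref{thm:main}, which you had available but did not use.
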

\begin{proof}
    From Theorem~\ref{thm:main} with $n=2$ it follows that
    $$
        k\,\M_{1,k} = 4\,\M_{2,k-1},
    $$
    and hence
    $$
        \M_{2,k} = \frac{k+1}{4}\,\M_{1,k+1}.
    $$
    Using Theorem~\ref{thm:m1k}, we obtain
    $$
        \sum_{k\geqslant 0} \M_{2, k}t^k
        = \frac{1}{4}\,\partial_t\!\left( \frac{1}{(1 - 2t)\sqrt{1 - 4t}}\right)
        =  \frac{1-3 t}{(1-2 t)^2\sqrt{(1-4 t)^3}}.
    $$
\end{proof}

\subsection{Iterated snakes}
Iterated snakes form a rather simple class of singular meanders, allowing a complete enumeration. Let
$$
F(x,t) := \sum_{n,k \geqslant 0} \Mis_{n,k}\,x^n t^k
$$
be the generating function for iterated snakes. In this subsection singular meanders of total order $1$ are regarded as snakes, in accordance with~\cite{B24}.
Set
$$
U(x,t) := 1 + F(x,t).
$$
Then $U(0,0)=1$, and $U(x,t)$ is the unique formal power series satisfying the equation
\begin{equation}\label{eq:gf-for-iter-snakes}
	(1-t)U^3 - \bigl(4(1-t)^2 - 2(1-t)x + 1\bigr)U^2 + (6(1-t)-x)U - 2 = 0.
\end{equation}
Some subsequences of the array $\left\lbrace\Mis_{n,k}\right\rbrace_{n,k\ge 0}$ coincide with other combinatorial sequences that can be found in the OEIS~\cite{oeis}
:
\begin{itemize}  
    \item $\left\lbrace\Mis_{0, k}\right\rbrace_{k>0}$ is \oeislink{A000012} (this is just the sequence of all ones);
    \item $\left\lbrace\Mis_{1,k}\right\rbrace_{k\geqslant 0}$ is \oeislink{A007070};
    \item $\left\lbrace\Mis_{2,k}\right\rbrace_{k\geqslant 0}$ is \oeislink{A181292};
    \item $\left\lbrace\Mis_{n,0}\right\rbrace_{n \geqslant 0}$ is \oeislink{A007165}.
\end{itemize}
The first three identifications follow since the corresponding subsequences have the same initial terms and satisfy the same recurrence relation (which follows from Eq.~\eqref{eq:gf-for-iter-snakes}). The sequence~\oeislink{A007165} counts the number of $P$-graphs, which are just graph-theoretic reformulations of iterated snakes; see~\cite{R86}.

\subsection{Irreducible meanders}
\begin{thm}
    Let $\varphi(x)$ be Euler's totient function. Then, for all integers $n,k \geqslant 0$,
    \begin{enumerate}      
    \item $\Mirr_{n,k} = 0$ for $k < 3$ or $n < 1$;
    \item $\Mirr_{2n+1,3} = 0$;
    \item $\Mirr_{2n+2,3} = \varphi(n+5) - 2$;
    \item $\Mirr_{2n+1,4} = (n+1)\bigl(\varphi(n+5) - 2\bigr)$;
    \item $\displaystyle \sum_{k \geqslant 0}\Mirr_{1, k} t^k = \frac{1}{8}\left(\sqrt{\frac{1-3t}{1+t}} - 1\right)^4$;
    \item $\displaystyle \sum_{k \geqslant 0}\Mirr_{2, k} t^k = \frac{1 - 2t - \sqrt{\frac{1-3t}{1+t}}}{\sqrt{(1-3t)(1+t)^5}}$.
    \end{enumerate}
\end{thm}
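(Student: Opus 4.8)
The plan is to analyze irreducible singular meanders by exploiting the combinatorial structure of their associated carcasses and the recursive relations already established, handling the six claims in increasing order of difficulty. Claims (1) and (2) are structural: part (1) follows from the definition of irreducibility (total order must exceed two, so $n+k \geq 3$; and one checks directly from the carcass description in the proof of Theorem~\ref{thm:m1k} that with $k < 3$ transverse intersections cannot support enough pairwise non-equivalent submeanders, while $n \geq 1$ is forced since $m \cap l \neq \varnothing$). Part (2) is a parity obstruction: the group action of $\mathbb{Z}/(n+k)\mathbb{Z}$ from the proof of Theorem~\ref{thm:main} (and its irreducible refinement in Corollary~\ref{cor:main-irreducible}) relates $\Mirr_{2n+1,3}$ to $\Mirr_{2n+2,2}$, which vanishes by part (1); alternatively, with exactly three non-transverse intersections an odd number of transverse intersections cannot close up consistently, so the set is empty.

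The heart of the argument is parts (3) and (5), from which (4) and (6) follow formally. For part (3), I would set up an explicit model for irreducible singular meanders of order $(2n,3)$: a carcass-type picture with $2n$ transverse crossings arranged essentially cyclically plus three tangency points, and translate the irreducibility condition (precisely $n+k+1 = 2n+4$ pairwise non-equivalent submeanders) into a rigidity/indecomposability condition on a cyclic word or necklace on $n+4$ "slots." The appearance of $\varphi(n+4)$ strongly suggests that irreducible configurations are in bijection with certain \emph{primitive} cyclic arrangements — equivalently, with generators of $\mathbb{Z}/(n+4)\mathbb{Z}$ up to the relevant symmetry — and the "$-2$" removes the two degenerate (reducible) rotation classes. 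So the key step is: establish a bijection between $\Mirr_{2n,3}$ and $\{\,j \in \{1,\dots,n+3\} : \gcd(j,n+4)=1\,\}$ minus two exceptional values, using the closed-meander group action to identify orbits and irreducibility to force primitivity. Then part (4) follows from Corollary~\ref{cor:main-irreducible} with the substitution $n \rightsquigarrow 2n$, $k = 4$: it gives $4\,\Mirr_{2n-1,4} = 2\cdot 2n\,\Mirr_{2n,3}$, i.e. $\Mirr_{2n-1,4} = n\,\Mirr_{2n,3} = n(\varphi(n+4)-2)$.

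For parts (5) and (6), the plan is to enumerate irreducible singular meanders with a single transverse intersection directly, refining the carcass analysis of Theorem~\ref{thm:m1k}. There $\M_{1,k}$ was computed by distributing chords among four quadrants; here I would impose irreducibility, which discards those carcasses admitting a "splitting" chord (a submeander-reducing cut), and track the generating function of the admissible ones. Writing $u = \tfrac{1-3t}{1+t}$, one recognizes $\sum \M_{1,k}t^k = \tfrac{1}{(1-2t)\sqrt{1-4t}}$ and expects the irreducible part to be an algebraic function in $u$; the target $\tfrac18\sqrt{u^4-1}$ should emerge as the "primitive part" after a Möbius-type substitution that linearizes the quadrant-distribution generating function, the fourth power reflecting the four quadrants and the $\sqrt{\,\cdot-1}$ subtracting the reducible (product) configurations. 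Part (6) then follows either by the same refined carcass count for two transverse intersections or, more cheaply, from Corollary~\ref{cor:main-irreducible} with $n=2$, which yields $k\,\Mirr_{1,k} = 4\,\Mirr_{2,k-1}$, hence $\sum_k \Mirr_{2,k}t^k = \tfrac14\,\partial_t\!\sum_k\Mirr_{1,k}t^k$; differentiating $\tfrac18\sqrt{u^4-1}$ and simplifying with $u' = -4/(1+t)^2$ should produce $\dfrac{2t-1+\sqrt{(1-3t)/(1+t)}}{\sqrt{(1-3t)(1+t)^5}}$ after routine algebra.

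The main obstacle will be part (5) — or equivalently the rigidity analysis underlying part (3): one must characterize \emph{exactly} which carcasses are irreducible, i.e. prove that the submeander count $n+k+1$ is attained precisely by the "primitive" configurations and by no others. This is where the combinatorics is genuinely delicate, since one has to control all $\frac{(n+k)(n+k+1)}{2}$ potential submeanders and show the drop from that maximum down to $n+k+1$ happens iff no single-crossing submeander of total order $1$ can be used to split off a smaller meander. Once that equivalence is in place, the $\varphi$ and the closed-form generating functions should fall out of standard primitive-necklace / algebraic-substitution bookkeeping.
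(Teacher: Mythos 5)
Your formal reductions are sound where they are actually carried out: part (4) from part (3) via Corollary~\ref{cor:main-irreducible}, and part (6) from part (5) by $k\,\Mirr_{1,k}=4\,\Mirr_{2,k-1}$, are exactly the paper's derivations. Your derivation of (2) from (1) via the corollary ($3\,\Mirr_{2m+1,3}=2(2m+2)\,\Mirr_{2m+2,2}=0$) is valid and is in fact more self-contained than the paper, which simply cites \cite{B24} for cases (1)--(3). One detail in (1) is wrong as stated: $m\cap l\neq\varnothing$ forces $n+k\geqslant 1$, not $n\geqslant 1$ (meanders of order $(0,k)$ exist for every $k\geqslant 1$; they fail irreducibility because they are snakes, not because they are empty).

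The genuine gap is part (5), which you yourself flag as the main obstacle and then do not close. Your plan --- refine the carcass count of Theorem~\ref{thm:m1k} by discarding carcasses admitting a ``splitting'' chord and recognize $\tfrac18\sqrt{u^4-1}$ as a ``primitive part'' after a M\"obius substitution --- is a different route from the paper's, but it is only a guess at what the answer should look like; no characterization of irreducible carcasses is given, and ``should emerge'' is doing all the work. The paper avoids this delicate direct enumeration entirely: it uses the factorization result (Theorem~\ref{thm:decomp}) to write every meander of order $(1,k)$ as a sequence of insertions alternating between iterated snakes and ``B-meanders'' (arborescent meanders with order-$(0,p)$ pieces inserted), which yields the functional equation $C=\frac{A+B+2AB}{1-AB}$ with $C$ and $A$ both already known in closed form; one then solves for $B$, strips off the $(0,p)$ insertions by the substitution $t\mapsto t/(1-t)$ to get $G=1/(1-F)$, and finally recovers $F=\sum\Mirr_{1,k}t^k$. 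The same criticism applies to your part (3): the bijection between irreducible meanders of order $(2n,3)$ and primitive residues mod $n+4$ is conjectured from the shape of the answer, not constructed. As written, the proposal establishes (2), (4), (6) conditionally on (1), (3), (5), but proves none of the latter.
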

\begin{proof}
    Cases~\textup{(1)}--\textup{(3)} were proved in~\cite{B24}. Case~\textup{(4)} follows from case~\textup{(3)} and Theorem~\ref{thm:main}. Analogously, case~\textup{(6)} follows from case~\textup{(5)} and Theorem~\ref{thm:main}. The only thing that remains to prove is case~\textup{(5)}.

    To prove case~\textup{(5)}, we introduce two additional classes of singular meanders. A singular meander is said to be \emph{arborescent}\footnote{We use this term because the poset of its submeanders (see~\cite{B24}) is a tree.} if it can be obtained from an irreducible singular meander by finitely many insertions of irreducible singular meanders. The generating function for arborescent singular meanders of order $(1,k)$ can be easily expressed in terms of the generating function for irreducible singular meanders of order $(1,k)$. Let $\Miirr_{n,k}$ be the number of non-equivalent arborescent singular meanders of order $(n,k)$, and set
\begin{align*}
    F(t) &:= \sum_{k \geqslant 0} \Mirr_{1,k} t^k,\\
    G(t) &:= \sum_{k \geqslant 0} \Miirr_{1,k} t^k = \sum_{r \geqslant 1} F(t)^r = \frac{F(t)}{1 - F(t)}.
\end{align*}

A singular meander is called a \emph{B-meander} if it can be obtained by inserting singular meanders of order $(0,p)$ into arborescent singular meanders. Since for each $p > 0$ there is a unique equivalence class of singular meanders of order $(0,p)$, each arborescent singular meander of order $(n,r)$ leads to $\binom{k-1}{r-1}$ non-equivalent B-meanders of order $(n,k)$. Thus the generating function $B(t)$ for the numbers of B-meanders of order $(1,k)$ is given by
$$
    B(t)
    = \sum_{k \geqslant 0} \sum_{r=1}^k \Miirr_{1,r} \binom{k-1}{r-1} t^k
    = \sum_{r \geqslant 1} \Miirr_{1,r} \left(\frac{t}{1-t}\right)^r
    = G\!\left(\frac{t}{1-t}\right).
$$

From Theorem~\ref{thm:decomp} it follows that each singular meander of order $(1,k)$ is uniquely obtained via a sequential insertion of B-meanders and iterated snakes(excluding snake of order $(1,0)$), where iterated snakes are not inserted into iterated snakes and B-meanders are not inserted into B-meanders. Since $A(t)$ includes the trivial iterated snake of order $(1,0)$, the generating function for inserted iterated snakes is $A(t)-1$. Therefore
\begin{equation}\label{eq:gf}
	C(t) = A(t) + \frac{A(t)^2 B(t)}{1-\left(A(t)-1\right)B(t)},
\end{equation}
where 
\begin{align*}
	C(t) &:= \sum_{k \geqslant 0} \M_{1,k} t^k = \frac{1}{(1 - 2t)\sqrt{1 - 4t}},\\
	A(t) &:= \sum_{k \geqslant 0} \Mis_{1,k} t^k = \frac{1}{2 t^2 - 4 t + 1}.
\end{align*}
From Eq.~\eqref{eq:gf} one finds $B(t)$, then $G(t)$, and finally $F(t)$, which completes the proof of case~\textup{(5)} and hence of the theorem.
\end{proof}

\bibliography{biblio}
\bibliographystyle{alpha}

\end{document}